\title{Spectral bounds for the independence ratio and the chromatic number of an operator}
\author{Christine Bachoc}
\address{C.~Bachoc, Universit\'e Bordeaux I, Institut de
  Math\'ematiques de Bordeaux, 351,
cours de la Li\-b\'e\-ration, 33405 Talence, France}
\email{bachoc@math.u-bordeaux1.fr}
\author{Evan DeCorte} 
\address{P.E.B~DeCorte, Delft Institute of Applied Mathematics, Technical University of Delft, P.O. Box 5031, 2600 GA Delft, The Netherlands}
\email{p.e.b.decorte@tudelft.nl}
\author{Fernando M\'ario de Oliveira Filho}
\address{F.M.~de Oliveira Filho, Institut f\"ur Mathematik,
Freie Universit\"at Berlin, Arnim\-allee 2, 14195 Berlin, Germany}
\email{fmario@mi.fu-berlin.de}
\author{Frank Vallentin} 
\address{F.~Vallentin, Mathematisches Institut, Universit\"at zu
  K\"oln, Weyertal 86--90, 50931 K\"oln, Germany}
\email{frank.vallentin@uni-koeln.de}
\thanks{The second and fourth author were supported by Vidi grant
  639.032.917 from the Netherlands Organization for Scientific
  Research (NWO). The third author was supported by Rubicon grant
  680-50-1014 from the Netherlands Organization for Scientific
  Research (NWO)}
\subjclass{47B25, 05C50}
\keywords{independence number/ratio, measurable chromatic number, infinite graph,
  adjacency operator, Lov\'asz $\vartheta$-number, Hoffman bound}
\date{June 19, 2013}
\newcommand{\R}{\mathbb{R}}
\newcommand{\N}{\mathbb{N}}
\newcommand{\C}{\mathbb{C}}
\newcommand{\chim}{\chi_{\rm m}}
\newcommand{\Tcal}{\mathcal{T}}
\newcommand{\Rcal}{\mathcal{R}}
\newtheorem{defin}{Definition}[section]
\newtheorem{definition}[defin]{Definition}
\newtheorem{theorem}[defin]{Theorem}
\DeclareMathOperator{\vol}{vol}
\DeclareMathOperator{\trace}{trace}
\DeclareMathOperator{\Real}{Re}
\begin{document}

\begin{abstract}
  We define the independence ratio and the chromatic number for
  bounded, self-adjoint operators on an $L^2$-space by extending the
  definitions for the adjacency matrix of finite graphs. In analogy to
  the Hoffman bounds for finite graphs, we give bounds for these
  parameters in terms of the numerical range of the operator. This
  provides a theoretical framework in which many packing and coloring
  problems for finite and infinite graphs can be conveniently studied
  with the help of harmonic analysis and convex optimization. The
  theory is applied to infinite geometric graphs on Euclidean space and on the
  unit sphere.
\end{abstract} 

\maketitle

\markboth{C.~Bachoc, P.E.B.~DeCorte, F.M.~de Oliveira Filho, F.~Vallentin}{Spectral bounds for $\overline{\alpha}$ and $\chi$ of an operator}

\section{Introduction}

The independence number~$\alpha$ and the chromatic number~$\chi$ are
invariants of finite graphs which are computationally difficult to
determine in general. A by now classical result due to Hoffman~\cite[(1.6), (4.2)]{Hoffman}
gives a relatively easy way to provide upper and lower bounds in terms
of the graph's spectrum. Hoffman gave a bound for the chromatic number of a
graph~$G$ having at least one edge in terms of the smallest eigenvalue~$m(A)$ and
largest eigenvalue~$M(A)$ of the adjacency matrix~$A$ of~$G$:
\begin{equation}
\label{eq:chibound}
\chi(G) \geq \frac{M(A) - m(A)}{-m(A)}.
\end{equation}
From the proof of this inequality it is clear that the argument also
works if one replaces the adjacency matrix with any symmetric matrix
whose support is contained in the support of~$A$. One can therefore
maximize the bound by adjusting the entries in the support. This
maximum exists and is the Lov\'asz~$\vartheta$-number of the
complement of~$G$~\cite{Lovasz}. As an application of the
$\vartheta$-number, Lov\'asz gave the following spectral bound for the
independence ratio~$\overline{\alpha}$ of a regular graph on $n$
vertices~\cite [Theorem 9]{Lovasz}:
\begin{equation}
\label{eq:alphabound}
\overline{\alpha}(G) = \frac{\alpha(G)}{n} \leq \frac{-m(A)}{M(A) - m(A)},
\end{equation}
where $A$ is the adjacency matrix of $G$.

Since its first appearance in 1979 the $\vartheta$-number became a
fundamental tool in combinatorial optimization. Lov\'asz~\cite{Lovasz}
gave a proof (from the book, see Aigner and
Ziegler~\cite{AignerZiegler}) that the Shannon capacity of the
pentagon equals~$\sqrt{5}$.  McEliece, Rodemich, and Rumsey
\cite{McElieceRodemichRumsey} and independently Schrijver
\cite{Schrijver1979a} showed that a strengthening of the
$\vartheta$-number gives Delsarte's linear programming bound in coding
theory~\cite{Delsarte}. Gr\"otschel, Lov\'asz, and
Schrijver~\cite{GroetschelLovaszSchrijver} used the $\vartheta$-number
and the related $\vartheta$-body to characterize perfect graphs and to
give polynomial time algorithms to solve the independent set problem
and the coloring problem for perfect graphs. Recently, Bachoc,
P\^echer, and Thi\'ery \cite{BachocPecherThiery} extended these
results to the circular chromatic number and circular perfect
graphs. Gouveia, Parrilo, and Thomas \cite{GouveiaParriloThomas}
extended the theory of $\vartheta$-bodies from graphs to polynomial
ideals. Computing the $\vartheta$-number of a graph is a basic
subroutine in the design of many approximation algorithms, see e.g.\
Karger, Motwani, and Sudan~\cite{KargerMotwaniSudan}, Kleinberg,
Goemans~\cite{KleinbergGoemans}, Alon, Makarychev, Makarychev, and
Naor~\cite{AlonMakarychevMakarychevNaor}, and Bri\"et, Oliveira, and
Vallentin~\cite{BrietOliveiraVallentin}. Bachoc, Nebe, Oliveira, and
Vallentin~\cite{BachocNebeOliveiraVallentin} generalized the
$\vartheta$-number from finite graphs to infinite graphs whose vertex
sets are compact metric spaces. Recent striking applications in
extremal combinatorics are the proof of an Erd\H{o}s-Ko-Rado-type
theorem for permutations by Ellis, Friedgut, and
Pilpel~\cite{EllisFriedgutPilpel} and the proof of the
Simonovits-S\'os conjecture of triangle intersecting families of
graphs by Ellis, Friedgut, and Filmus~\cite{EllisFriedgutFilmus}.

By looking at the adjacency matrix of a finite graph $G = (V, E)$ as
an operator on $L^2(V)$, we extend the definitions of independence
number and chromatic number from finite graphs to bounded,
self-adjoint operators on $L^2(V)$ where $V$ is now a measure
space. In this paper we develop a theory for bounding these parameters in terms
of the spectrum of the operator, thereby extending Hoffman's and
Lov\'asz' results from finite to infinite graphs.

The main body of the theory is presented in Section~\ref{sec:spectral
  bounds}. There are several recent results which can be conveniently
interpreted as examples of our theory: In Section~\ref{sec:euclidean}
we compute spectral bounds for graphs defined on the Euclidean space
which are invariant under translations. Thereby we can recover results
from Oliveira and Vallentin~\cite{OliveiraVallentin},
Kolountzakis~\cite{Kolountzakis}, and Steinhardt~\cite{Steinhardt}. In
Section~\ref{sec:unit sphere} we determine spectral bounds for distance
graphs defined on the unit sphere and generalize results from Bachoc,
Nebe, Oliveira, and Vallentin~\cite{BachocNebeOliveiraVallentin}.

\section{Spectral bounds for bounded, self-adjoint operators}
\label{sec:spectral bounds}

\subsection{Some graph theory}
\label{sec:graph-theory}

Let $G = (V,E)$ be a finite, undirected graph on~$n$ vertices, with
vertex set $V$ and edge set $E \subseteq \binom{V}{2}$. The \emph{adjacency
matrix} of $G$ is the symmetric matrix $A \in \R^{V \times V}$ defined by
\begin{equation*}
A(v,w) = 
\left\{
\begin{array}{ll}
1 & \text{if $\{v,w\} \in E$,}\\
0 & \text{otherwise.}
\end{array}
\right.
\end{equation*}
The eigenvalues of~$A$ are real; we order them decreasingly
\begin{equation*}
\lambda_1(A) \geq \lambda_2(A) \geq \cdots \geq \lambda_{n}(A).
\end{equation*}
The smallest and largest eigenvalues of~$A$ are most important for us,
so we set $m(A) = \lambda_n(A)$, and $M(A) = \lambda_1(A)$.

An \emph{independent set} of $G$ is a subset of the vertex set in
which no two vertices are adjacent. The \emph{independence
  number}~$\alpha(G)$ of~$G$ is the cardinality of a largest
independent set. The \emph{chromatic number}~$\chi(G)$ of~$G$ is the smallest
number~$k$ so that one can partition the vertex set $V$ into $k$
independent sets. These independent sets are often called \emph{color
  classes}. 

An observation that will be key to our generalization in the next
section is the following: A set $I \subseteq V$ is independent if and
only if
\begin{equation}
\label{characterization}
\sum_{v \in V} \sum_{w \in V} A(v,w) f(v) f(w) = 0
\end{equation}
holds for all~$f \in \R^V$ supported on~$I$, i.e., for all $f$ such
that~$f(v) = 0$ whenever~$v \not\in I$. 

Hoffman~\cite{Hoffman} gave the following spectral bound for~$\chi$:
\begin{equation*}
\chi(G) \geq \frac{M(A) - m(A)}{-m(A)}.
\end{equation*}

A possibly stronger bound is given by the Lov\'asz $\vartheta$-number
of the complement of the graph. One of many possible definitions (see
Knuth~\cite{Knuth} for a survey) of the~$\vartheta$-number of a
graph~$G$, denoted by~$\vartheta(G)$, is as the optimal value of the
following semidefinite program:
\begin{equation}
\label{eq:sdp}
\begin{array}{rl}
\max &\sum_{v,w \in V} K(v,w)\\[1.5ex]
&\sum_{v \in V} K(v,v) = 1,\\[1.5ex]
& K(v,w) = 0\quad\text{whenever $\{v,w\} \in E$},\\[1ex]
&\text{$K \in \R^{V \times V}$ is positive semidefinite}.
\end{array}
\end{equation}
Lov\'asz~\cite{Lovasz} proved the sandwich theorem
\begin{equation*}
\alpha(G) \leq \vartheta(G) \leq \chi(\overline{G}),
\end{equation*}
where $\overline{G}$ is the complement of $G$. Sometimes,
$\chi(\overline{G})$ is called the \emph{clique cover number} of
$G$. He also pointed out the relation of the $\vartheta$-number to
Hoffman's bound for the chromatic number, namely that
\begin{equation*}
\chi(G) \geq \vartheta(\overline{G}) \geq \frac{M(A) - m(A)}{-m(A)},
\end{equation*}
by proving that~$\vartheta(\overline{G})$ is the maximum of
\begin{equation*}
\frac{M(B) - m(B)}{-m(B)},
\end{equation*}
where~$B$ ranges over all symmetric matrices in~$\R^{V \times V}$ such
that~$B(v, w) = 0$ whenever~$\{v, w\} \notin E$.

Lov\'asz \cite[Theorem 9]{Lovasz} gave the following spectral bound
for the independence number of an $r$-regular graph:
\begin{equation*}
\alpha(G) \leq \vartheta(G) \leq n \frac{-m(A)}{M(A) - m(A)},
\end{equation*}
where the second inequality is an equality in the case of
edge-transitive graphs. Notice that $r$-regularity of a graph is equivalent
to the property that the all-one vector $1_V = (1, \ldots, 1)^{\sf T}$ is an
eigenvector of the adjacency matrix with eigenvalue~$r$. In this case
$r$ is the largest eigenvalue.

The $\vartheta$-number is actually a lower bound for the
\emph{fractional chromatic number} $\chi^*(\overline{G})$ of the graph
$\overline{G}$, also called the \emph{fractional clique covering
  number} of $G$. This is the minimal value of the sum $\lambda_1 + \cdots
+ \lambda_k$ so that $\lambda_1, \ldots, \lambda_k$ are nonnegative numbers such that
there are independent sets $C_1, \ldots, C_k$ in $\overline{G}$ with
\[
\lambda_1 1_{C_1} + \cdots + \lambda_k 1_{C_k} = 1_V,
\]
where $1_S \in \R^V$ denotes the characteristic vector of
the set $S \subseteq V$.  We have $\chi(\overline{G}) \geq
\chi^*(\overline{G}) \geq \vartheta(G)$.

Schrijver~\cite{Schrijver1979a} introduced a variant of the
$\vartheta$-number called the $\vartheta'$-number by adding
to~\eqref{eq:sdp} the constraint that~$K$ has only nonnegative
entries.  Galtman \cite{Galtman} noticed that
$\vartheta'(\overline{G})$ is related to Hoffman's bound, being the
maximum of
\begin{equation*}
\frac{M(B) - m(B)}{-m(B)},
\end{equation*}
where~$B$ ranges over all symmetric \emph{nonnegative} matrices
in~$\R^{V \times V}$ such that $B(v, w) = 0$ whenever~$\{v, w\} \notin
E$. The proof of this result uses Lov\'asz's original argument
together with Perron-Frobenius theory. One way to interpret this result is
that $\vartheta'$ provides the best spectral bound for all weighted
adjacency matrices where the weights are given by a probability
distribution on the edge set of $G$.

\subsection{Some Hilbert space theory}

We recall some definitions and facts from Hilbert space theory. For
background we refer the reader to the Hilbert space problem
book~\cite{Halmos} by Halmos.

Let $(V, \Sigma, \mu)$ be a measure space consisting of a set~$V$, a
$\sigma$-algebra $\Sigma$ on $V$, and a measure~$\mu$. We consider the
Hilbert space
\begin{equation*}
L^2(V) = \Big\{\,f \colon V \to \C : \text{$f$ measurable, $\int_V |f|^2\, d\mu < \infty$\,}\Big\}
\end{equation*}
of complex-valued square-integrable functions where we identify two
functions which are equal $\mu$-almost everywhere. The inner product
is defined by
\begin{equation*}
(f,g) = \int_V f(x) \overline{g(x)}\, d\mu(x).
\end{equation*}
A linear operator $A \colon L^2(V) \to
L^2(V)$ is \emph{bounded} if there is a nonnegative real number~$M$ so
that for all $f \in L^2(V)$ the inequality
\begin{equation*}
\|Af\| \leq M\|f\|
\end{equation*}
holds. The infimum of the numbers~$M$ having this property is the
\emph{norm} of the operator~$A$, denoted by~$\|A\|$.

The operator~$A$ is \emph{self-adjoint} if we have
\begin{equation*}
(Af,g) = (f,Ag)
\end{equation*}
for all $f,g \in L^2(V)$. The \emph{numerical range} of~$A$ is defined as
\begin{equation*}
W(A) = \{(\,Af,f) : \|f\| = 1\,\}.
\end{equation*}
If $A$ is self-adjoint, then $(Af, f) = (f, Af) = \overline{(Af, f)}$
for each $f \in L^2(V)$, which implies that the numerical range of $A$
is a subset of $\R$. In this case, the numerical range is always an
interval. Indeed, given $f,g \in L^2(V)$ with $\| f \| = \| g \| = 1$
and $g \neq -f$, define for each $t \in [0,1]$ the element
\[
h_t = \frac{t f + (1-t)g}{\| t f + (1-t)g \|}
\]
of $L^2(V)$. Then $t \mapsto (A h_t, h_t)$ is a continuous function
$[0,1] \to \R$ mapping $0$ to $(Af,f)$ and $1$ to $(Ag,g)$.

If moreover $A$ is bounded, then $W(A)$ is contained in the interval
$[-\|A\|, \|A\|]$, by the Cauchy-Schwarz inequality. In this case, we
denote the endpoints of $W(A)$ by $m(A) = \inf\{\,(Af,f) : \|f\| =
1\,\}$ and $M(A) =~\sup\{\,(Af,f) : \|f\| = 1\,\}$.  The two
endpoints~$m(A)$ and~$M(A)$ may or may not belong to~$W(A)$.

\subsection{The independence ratio of an operator}

In~\eqref{characterization} we saw how to characterize the independent
sets of a finite graph in terms of the adjacency matrix. This
motivates the following definition.

\begin{definition}
\label{def:independence}
Let $A \colon L^2(V) \to L^2(V)$ be a bounded, self-adjoint operator. A
measurable set $I \subseteq V$ is called an \emph{independent set} of
$A$ if $(Af,f) = 0$ for each $f \in L^2(V)$ which vanishes almost everywhere
outside of $I$.
\end{definition}

Notice that sets of $\mu$-measure zero are always independent.
In a similar vein, if $I$ is a measurable set and $N$ has measure 
zero, then $I$ is independent if and only if $I \cup N$ is also independent. This says
that we may speak of a measurable set being independent even if we only know
it up to nullsets.

In order to measure the size of an independent set it is convenient to
assume that the measure $\mu$ is a probability measure, although we
will extend these ideas to the Lebesgue upper density of subsets of
$\R^n$ in Section~\ref{sec:euclidean}.

The \emph{independence ratio} of a bounded, self-adjoint
operator~$A\colon L^2(V) \to L^2(V)$ is defined as
\[
\overline{\alpha}(A)= \sup\{\,\mu(I) : I \text{ independent set of }
A\,\}.
\]
In the case of a finite graph~$G = (V, E)$ with adjacency matrix~$A$,
the independence ratio of~$A$ is equal to~$\alpha(G) / |V|$.

Let~$1_V$ denote the all-one function in~$L^2(V)$.  The following
theorem can be used to upper bound the independence ratio of an
operator.

\begin{theorem}
\label{th:alpha_bound}
Let $(V,\mu)$ be a probability space and let $A\colon L^2(V) \to
L^2(V)$ be a nonzero, bounded, self-adjoint operator. Fix a real
number~$R$ and set~$\varepsilon = \|A 1_V - R 1_V\|$.  Suppose there
exists a set $I \subseteq V$ with $\mu(I) > 0$ which is independent
for $A$. Then, if~$R - m(A) - \varepsilon > 0$, we have
\[
\overline{\alpha}(A) \leq \frac{-m(A) + 2\varepsilon}{R - m(A) - \varepsilon}.
\]
\end{theorem}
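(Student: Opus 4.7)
The plan is to adapt the classical Hoffman--Lov\'asz argument to the operator setting: test the independence condition on $f = 1_I$, split $1_I$ orthogonally into its component along $1_V$ and a remainder perpendicular to $1_V$, and then estimate each resulting term using the hypothesis on $\varepsilon$ together with the definition of $m(A)$.

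Set $a = \mu(I)$. Since $\mu$ is a probability measure, $1_I \in L^2(V)$, so Definition~\ref{def:independence} gives $(A 1_I, 1_I) = 0$. Write $1_I = a \cdot 1_V + g$ with $g = 1_I - a \cdot 1_V$; then $(g, 1_V) = \mu(I) - a = 0$ and $\|g\|^2 = \|1_I\|^2 - a^2 = a - a^2$. Self-adjointness of $A$ expands the identity $(A 1_I, 1_I) = 0$ into
\[
0 = a^2 (A 1_V, 1_V) + 2a \Real(A 1_V, g) + (Ag, g).
\]
Setting $h = A 1_V - R 1_V$ so that $\|h\| = \varepsilon$, the orthogonality $(g, 1_V) = 0$ gives $(A 1_V, g) = (h, g)$, and Cauchy--Schwarz together with the definition of $m(A)$ yields the three estimates
\[
(A 1_V, 1_V) \geq R - \varepsilon, \quad \Real(A 1_V, g) \geq -\varepsilon \|g\|, \quad (Ag, g) \geq m(A) \|g\|^2.
\]

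Substituting these into the expansion, dividing by $a > 0$, and using $\|g\| = \sqrt{a(1-a)} \leq 1$ produces
\[
0 \geq a(R - \varepsilon) - 2\varepsilon + m(A)(1-a) = a(R - m(A) - \varepsilon) + m(A) - 2\varepsilon.
\]
Dividing by the positive quantity $R - m(A) - \varepsilon$ yields $a \leq (-m(A) + 2\varepsilon) / (R - m(A) - \varepsilon)$, and taking the supremum over all independent sets $I$ produces the claim for $\overline{\alpha}(A)$.

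The only real delicacy is bookkeeping the two $\varepsilon$-errors caused by $1_V$ being merely an approximate eigenvector of $A$: one from $|(h, 1_V)| \leq \varepsilon$ in the $a^2 (A 1_V, 1_V)$ term, and another from $|(h, g)| \leq \varepsilon \|g\|$ in the cross term. The crude estimate $\|g\| \leq 1$ then combines them into the $+2\varepsilon$ in the numerator of the stated bound; in the exact-eigenvector case $\varepsilon = 0$ this specializes to the classical Hoffman--Lov\'asz inequality $a \leq -m(A)/(R-m(A))$.
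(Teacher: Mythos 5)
Your proof is correct and follows essentially the same route as the paper: test independence on $1_I$, decompose orthogonally along $1_V$, introduce $\eta = A1_V - R1_V$, and estimate the three terms by Cauchy--Schwarz and the definition of $m(A)$. The only cosmetic difference is that you bound the cross term via $|(\eta,g)|\le\varepsilon\|g\|\le\varepsilon$ while the paper bounds it via $|(\eta,1_I)|\le\varepsilon\|1_I\|\le\varepsilon$; both yield the same $2\varepsilon$ in the numerator.
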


When~$A$ is the adjacency matrix of a finite regular graph (here $\mu$ is
the uniform probability measure on~$V$), then~$1_V$ is the eigenvector
corresponding to the maximum eigenvalue~$M(A)$. Then, taking~$R =
M(A)$, we have~$\varepsilon = 0$, and we recover~\eqref{eq:alphabound}
from the theorem.

\begin{proof}
Let $I \subseteq V$ be an independent set with $\mu(I) > 0$ and let
$1_I$ denote its characteristic function. Decompose $1_I$ orthogonally as
$\beta 1_V + g$, with $\beta$ a scalar and $g$ orthogonal to~$1_V$.
Notice the identities
\[
\mu(I) = (1_I, 1_V) = \beta
\]
and
\[
\mu(I) = (1_I, 1_I) = \beta^2 + \| g\|^2.
\]
By independence and self-adjointness one therefore has
\[
\begin{split}
0 = (A 1_I, 1_I) & = (A(\beta 1_V + g), \beta 1_V + g)\\
&  = \beta^2 (A1_V,1_V) + \beta (A1_V, g) + \beta(g, A1_V) + (Ag,g).
\end{split}
\]

Let~$\eta = A 1_V - R 1_V$. We consider the first three summands:
\[
(A1_V, 1_V) = (R1_V + \eta, 1_V) = R + (\eta, 1_V),
\]
and
\[
\begin{split}
(A1_V,g) & = (R1_V + \eta, 1_I - \beta 1_V) = R(1_V, 1_I) - R\beta(1_V,1_V) + (\eta,1_I) - \beta(\eta,1_V)\\
& = (\eta,1_I) - \beta(\eta, 1_V),
\end{split}
\]
and similarly
\[
(g,A1_V) = (1_I, \eta) - \beta(1_V, \eta)= (1_I, \eta) - \beta(\eta, 1_V).
\]

Putting it together, using Cauchy-Schwarz, the inequality $\beta \leq
1$, and $(g,g) = \beta-\beta^2$, yields
\[
\begin{split}
0 &=  \beta^2 (A1_V,1_V) + \beta (A1_V, g) + \beta(g, A1_V) + (Ag,g)\\
&=  \beta^2(R + (\eta,1_V)) + \beta((\eta,1_I) + (1_I, \eta) - 2\beta(\eta,1_V)) + (Ag,g)\\
&=  \beta^2 R - \beta^2 (\eta,1_V) + \beta((\eta,1_I) + (1_I, \eta)) + (Ag,g)\\
&\geq \beta^2 R - \beta^2 \varepsilon - 2 \beta \varepsilon + m(A) (\beta-\beta^2).
\end{split} 
\]
Now we divide by $\beta$ and rearrange the terms and get the desired result
\[
\frac{-m(A) + 2\varepsilon}{R- m(A) - \varepsilon} \geq \beta.\qedhere
\]
\end{proof}

Notice that the hypothesis $\mu(I) > 0$ is really needed; if $A$ is the 
operator $L^2(V) \to L^2(V)$ defined by $A 1_V = 1_V$ and $A g = \frac{1}{2} g$
for all $g$ orthogonal to~$1_V$, then clearly the empty set is independent for $A$,
but $-m(A) / (M(A) - m(A)) = -1$.

\subsection{The chromatic number of an operator}
\label{sec:chi-op}

Let $(V, \Sigma, \mu)$ be a measure space. Let $A \colon L^2(V) \to
L^2(V)$ be a bounded, self-adjoint operator. The \emph{chromatic
  number} of~$A$, denoted by~$\chi(A)$, equals the smallest number~$k$
such that one can partition~$V$ into~$k$ independent sets.

We can lower bound the chromatic number of~$A$ when we know the
two endpoints $m(A)$ and $M(A)$ of the numerical range~$W(A)$. This is
completely analogous to Hoffman's bound \eqref{eq:chibound} for finite
graphs when $A$ is the adjacency matrix of the graph.

\begin{theorem}
\label{th:chromatic bound}
Let $A \colon L^2(V) \to L^2(V)$ be a nonzero, bounded, self-adjoint operator.
If $\chi(A) < \infty$, then
\begin{equation*}
\chi(A) \geq \frac{M(A) - m(A)}{-m(A)}.
\end{equation*}
\end{theorem}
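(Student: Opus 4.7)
The plan is to mimic Hoffman's finite-dimensional proof of the chromatic bound in the Hilbert space setting, replacing eigenvectors and eigenvalue estimates with the operator inequality $(Ag,g)\geq m(A)\|g\|^2$. First I would fix a partition $V=V_1\sqcup\cdots\sqcup V_k$ with $k=\chi(A)$ into independent sets and, for an arbitrary $f\in L^2(V)$, introduce the pieces $f_i := 1_{V_i}f$. These have pairwise disjoint supports, so $(f_i,f_j)=0$ for $i\neq j$ and $\|f\|^2=\sum_i\|f_i\|^2$, and by Definition~\ref{def:independence} each color class gives $(Af_i,f_i)=0$. Bilinearity and self-adjointness then reduce the quadratic form to
\[
(Af,f)=\sum_{i\neq j}(Af_i,f_j).
\]

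The key step is the estimate
\[
|(Af_i,f_j)|\leq \tfrac{-m(A)}{2}\bigl(\|f_i\|^2+\|f_j\|^2\bigr) \qquad(i\neq j).
\]
I would prove this by applying $(Ag,g)\geq m(A)\|g\|^2$ to $g=f_i+e^{i\theta}f_j$ for a free parameter $\theta\in\R$. Expanding the quadratic form, using $(Af_i,f_i)=(Af_j,f_j)=0$, the orthogonality $(f_i,f_j)=0$, and the identity $(Af_j,f_i)=\overline{(Af_i,f_j)}$ from self-adjointness, the inequality collapses to
\[
2\,\Real\!\bigl(e^{-i\theta}(Af_i,f_j)\bigr)\geq m(A)\bigl(\|f_i\|^2+\|f_j\|^2\bigr).
\]
Choosing $\theta$ so that $e^{-i\theta}(Af_i,f_j)=-|(Af_i,f_j)|$ yields the claimed bound.

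Summing the key estimate over $i\neq j$ and using the identity $\sum_{i\neq j}(\|f_i\|^2+\|f_j\|^2)=2(k-1)\|f\|^2$ gives $|(Af,f)|\leq -m(A)(k-1)\|f\|^2$ for every $f\in L^2(V)$. Since $(Af,f)$ is real, taking the supremum of $(Af,f)/\|f\|^2$ over unit vectors yields $M(A)\leq -m(A)(k-1)$, which rearranges at once to the desired inequality $\chi(A)\geq (M(A)-m(A))/(-m(A))$, provided we know $m(A)<0$.

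The main obstacle, and the reason for the non-triviality of the statement, is the edge case $m(A)\geq 0$: I would dispose of it as follows. If $m(A)\geq 0$, then the key estimate forces $(Af_i,f_j)=0$ for all $i,j$, hence $(Af,f)=0$ for every $f\in L^2(V)$; by the complex polarization identity for self-adjoint operators this gives $A=0$, contradicting the hypothesis that $A$ is nonzero. Thus $\chi(A)<\infty$ already forces $m(A)<0$, and the bound above closes the argument. Beyond this case analysis, the remaining care is only in justifying the finite expansion $(Af,f)=\sum_{i,j}(Af_i,f_j)$, which is harmless because $k$ is finite.
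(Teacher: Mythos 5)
Your proof is correct, but it takes a genuinely different route from the paper's. The paper follows Bollob\'as's trace argument: for each $\varepsilon>0$ it picks a unit vector $f$ with $(Af,f)\geq M(A)-\varepsilon$, normalizes its restrictions to the color classes to get an orthonormal system $f_1,\dots,f_k$, and compresses $A$ to their $k$-dimensional span, obtaining a self-adjoint matrix $B=PA$ with $\trace B=0$, numerical range contained in $[m(A),M(A)]$, and top eigenvalue at least $M(A)-\varepsilon$; summing the eigenvalues gives $M(A)-\varepsilon+(k-1)m(A)\leq 0$, and then $\varepsilon\to 0$. You instead bound the quadratic form directly for \emph{every} $f$ via the off-diagonal estimate $|(Af_i,f_j)|\leq\frac{-m(A)}{2}\bigl(\|f_i\|^2+\|f_j\|^2\bigr)$, obtained by testing the defining inequality for $m(A)$ against $f_i+e^{i\theta}f_j$ with an optimally chosen phase; summing over ordered pairs yields $|(Af,f)|\leq -m(A)(k-1)\|f\|^2$ and hence $M(A)\leq(k-1)(-m(A))$, with no limiting argument and no finite-dimensional compression. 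Your handling of the degenerate case $m(A)\geq 0$ (the estimate forces $(Af,f)\equiv 0$, hence $A=0$ by polarization on a complex Hilbert space, contradicting $A\neq 0$) is also sound, and in fact establishes inside the proof what the paper only records as a remark afterwards, namely that $A\neq 0$ and $\chi(A)<\infty$ force $m(A)<0$. The only blemish is cosmetic: you use $i$ simultaneously as a summation index and as the imaginary unit, which should be disentangled in a final write-up.
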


\begin{proof}
Let $C_1, \ldots, C_k$ be a partition of~$V$ into independent
sets. Recall that the union of an independent set with a nullset is
also independent. Hence we may assume that~$\mu(C_i) > 0$ for all~$i
= 1$, \dots,~$k$.

We decompose the Hilbert space $L^2(V)$ into an orthogonal direct sum
\begin{equation*}
L^2(V) = \bigoplus_{i=1}^k L^2(C_i).
\end{equation*}
Fix $\varepsilon > 0$. Let $f \in L^2(V)$ be such that $\|f\| = 1$ and
such that $(Af,f) \geq M(A) - \varepsilon$. Decompose $f$ as
\begin{equation*}
f = \sum_{i=1}^k \alpha_i f_i,\quad\text{with $f_i \in L^2(C_i)$ and $\|f_i\| = 1$},
\end{equation*}
and consider the $k$-dimensional Euclidean space $U \subseteq L^2(V)$
with orthonormal basis $f_1, \ldots, f_k$. By $P\colon L^2(V) \to U $ we
denote the orthogonal projection of~$L^2(V)$ onto~$U$. We will
consider the finite-dimensional self-adjoint operator $B\colon U \to U$
defined by $B = P A$. The numerical range of~$B$, in this case the
interval between the smallest and largest eigenvalues, lies in $[m(A),
M(A)]$ because for a unit vector $u \in U$ we have
\begin{equation*}
(Bu,u) = (PAu,u) = (Au,u) \in [m(A), M(A)].
\end{equation*}
Furthermore, the largest eigenvalue~$\lambda_1(B)$ of~$B$ is at least
$M(A)-\varepsilon$ because
\begin{equation*}
\lambda_1(B) \geq (Bf,f) = (Af,f) \geq M(A) - \varepsilon.
\end{equation*}
The trace of~$B$ equals zero because
\begin{equation*}
\trace B = \sum_{i=1}^k (Bf_i, f_i) = \sum_{i=1}^k (Af_i, f_i) = 0,
\end{equation*}
where $(Af_i, f_i) = 0$ since $f_i \in L^2(C_i)$. Now for the sum of the
eigenvalues of $B$ the following holds:
\begin{equation*}
M(A)-\varepsilon + (k-1) m(A) \leq \sum_{i = 1}^k \lambda_i(B) = \trace B = 0,
\end{equation*}
and hence
\begin{equation*}
k \geq 1 - \frac{M(A)-\varepsilon}{m(A)}.
\end{equation*}
The last inequality holds for all $\varepsilon > 0$ and so the theorem
follows.
\end{proof}

From the proof it follows that if $A \neq 0$ and if $\chi(A) <
\infty$, then $m(A) < 0$ and $M(A) > 0$, because $[m(B), M(B)] \subseteq
[m(A), M(A)]$ and $m(B) < 0$ and $M(B) > 0$ since $\trace B = 0$ and
$B \neq 0$.

Finally, we remark that the above proof is very close to the proof of
Hoffman's bound in the book \cite[Chapter VIII.2, Theorem 7, page
265]{Bollobas} by Bollob\'as. In fact, we only had to include the
epsilon.

\subsection{The fractional chromatic number of an operator}

When $(V,\mu)$ is a probability space we can give a bound for the
fractional chromatic number of an operator.  Let $A\colon L^2(V) \to
L^2(V)$ be a bounded, self-adjoint operator. The \emph{fractional
  chromatic number} of~$A$, denoted by $\chi^*(A)$, is the infimum
over all sums $\lambda_1 + \cdots + \lambda_k$ of nonnegative numbers
$\lambda_1$, \dots,~$\lambda_k$ such that there are independent sets
$C_1$, \dots,~$C_k$ of $A$ with
\[
\lambda_1 1_{C_1} + \cdots + \lambda_k 1_{C_k} = 1_V.
\]

\begin{theorem}
  Let $(V,\mu)$ be a probability space and let $A\colon L^2(V) \to L^2(V)$
  be a bounded, self-adjoint operator which is not zero. If $\chi^*(A) <
  \infty$, then
\begin{equation*}
\chi^*(A) \geq \frac{(A 1_V, 1_V) - m(A)}{-m(A)}.
\end{equation*}
\end{theorem}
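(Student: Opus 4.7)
The plan is to adapt Hoffman's fractional-chromatic-number argument to the operator setting by means of two applications of the Cauchy--Schwarz inequality. Throughout I work with the shifted operator $T = A - m(A)\,\mathrm{id}$; since the numerical range of $T$ equals $[0, M(A) - m(A)]$, the operator $T$ is bounded, self-adjoint, and positive semi-definite. Two identities drive the argument. First, for any measurable independent set $C$, the definition of independence gives $(A 1_C, 1_C) = 0$, whence
\[
(T 1_C, 1_C) = -m(A)\,\mu(C).
\]
Second, the numerator of the target bound appears naturally as $(T 1_V, 1_V) = (A 1_V, 1_V) - m(A)$.

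Fix a fractional coloring $\lambda_1 1_{C_1} + \cdots + \lambda_k 1_{C_k} = 1_V$ with $\lambda_i \geq 0$ and each $C_i$ independent. I would next invoke the Cauchy--Schwarz inequality for the positive semi-definite sesquilinear form $(f,g) \mapsto (Tf, g)$ (obtained by the usual discriminant argument from $(T(f + tg), f + tg) \geq 0$) to estimate
\[
(T 1_{C_i}, 1_V) \leq \sqrt{(T 1_{C_i}, 1_{C_i})}\,\sqrt{(T 1_V, 1_V)} = \sqrt{-m(A)\,\mu(C_i)}\,\sqrt{(T 1_V, 1_V)}.
\]
Expanding $(T 1_V, 1_V) = \sum_i \lambda_i (T 1_{C_i}, 1_V)$ by linearity and substituting this estimate yields
\[
(T 1_V, 1_V) \leq \sqrt{-m(A)}\,\sqrt{(T 1_V, 1_V)}\,\sum_{i=1}^k \lambda_i \sqrt{\mu(C_i)}.
\]
If $(T 1_V, 1_V) = 0$ the conclusion is trivial; otherwise I may divide by $\sqrt{(T 1_V, 1_V)}$.

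The second Cauchy--Schwarz is the elementary one for sequences: writing $\lambda_i \sqrt{\mu(C_i)} = \sqrt{\lambda_i} \cdot \sqrt{\lambda_i \mu(C_i)}$, one obtains $\sum_i \lambda_i \sqrt{\mu(C_i)} \leq (\sum_i \lambda_i)^{1/2} (\sum_i \lambda_i \mu(C_i))^{1/2}$. Pairing the fractional-coloring identity with $1_V$ in $L^2(V)$ and using that $\mu$ is a probability measure gives $\sum_i \lambda_i \mu(C_i) = (1_V, 1_V) = 1$. Squaring the combined estimate produces $\sum_i \lambda_i \geq ((A 1_V, 1_V) - m(A))/(-m(A))$, and taking the infimum over all fractional colorings gives the theorem. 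The only mild wrinkle is verifying that the denominator is nonzero: $\chi^*(A) < \infty$ forces $\chi(A) < \infty$ (any fractional-covering family $\{C_i\}$ refines into a partition by passing to $C_i \setminus (C_1 \cup \cdots \cup C_{i-1})$, each of which is still independent), and then the remark following Theorem~\ref{th:chromatic bound} yields $m(A) < 0$. The main conceptual step is to spot the correct shift of $A$ and the two-stage Cauchy--Schwarz; no deeper functional-analytic machinery (for instance, a square root of $T$) is needed.
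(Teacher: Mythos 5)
Your proof is correct, and it rests on the same two pillars as the paper's: the positivity of the shifted operator $T = A - m(A)I$ and the identity $(A 1_{C_i}, 1_{C_i}) = 0$ for independent sets. The execution differs, though. The paper expands a single cleverly chosen nonnegative quantity,
\[
0 \leq \sum_{i=1}^k \lambda_i \bigl( (A - m(A)I)(\gamma 1_{C_i} - 1_V), \gamma 1_{C_i} - 1_V \bigr),
\]
which after simplification using $\sum_i \lambda_i 1_{C_i} = 1_V$ collapses directly to $0 \leq -\gamma^2 m(A) - \gamma\bigl((A1_V,1_V) - m(A)\bigr)$; this is the route the authors attribute to Schrijver's proof of the sandwich theorem. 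You instead run a two-stage Cauchy--Schwarz: first for the semi-inner product $(f,g) \mapsto (Tf,g)$ to bound each $(T1_{C_i},1_V)$, then the discrete Cauchy--Schwarz on $\sum_i \lambda_i \sqrt{\mu(C_i)}$. Since the Cauchy--Schwarz inequality for the form $(Tf,g)$ is itself the discriminant of the same kind of nonnegative quadratic, the two arguments are close cousins; the paper's version is shorter because the right test vectors are guessed in advance, while yours is arguably easier to discover and makes the role of each inequality transparent. One point in your favor: you explicitly justify $m(A) < 0$ (via $\chi^*(A) < \infty \Rightarrow \chi(A) < \infty$ by disjointification, then the remark after Theorem~\ref{th:chromatic bound}), whereas the paper divides by $-m(A)$ without comment. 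Your handling of the degenerate case $(T1_V,1_V) = 0$ before dividing is also correct and necessary for your route.
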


\begin{proof}
Consider a fractional coloring $\lambda_1 1_{C_1} + \cdots + \lambda_k 1_{C_k} =
1_V$ with $\gamma = \lambda_1 + \cdots + \lambda_k$. Then,
\[
\begin{split}
0 \leq & \sum_{i=1}^k \lambda_i ((A-m(A)I) (\gamma 1_{C_i} - 1_V),\gamma
1_{C_i} - 1_V)\\
= & \gamma^2 \sum_{i=1}^k \lambda_i ((A-m(A)I) 1_{C_i}, 1_{C_i}) - 2\gamma
\sum_{i=1}^k \lambda_i ((A-m(A)I) 1_{C_i}, 1_V)\\
&\qquad + \sum_{i=1}^k \lambda_i ((A-m(A)I) 1_V, 1_V)\\
= & -\gamma^2 m(A) - \gamma ((A-m(A)I) 1_V, 1_V)\\
= & -\gamma^2 m(A) - \gamma ((A 1_V, 1_V) - m(A)),
\end{split}
\]
and so
\[
\gamma \geq \frac{(A 1_V, 1_V) - m(A)}{-m(A)}.\qedhere
\]
\end{proof}

The above proof is very close to the proof of the sandwich theorem
given in Schrijver \cite[Theorem 67.1]{Schrijver2003}.

\subsection{Relation to the $\vartheta$-number}

In Section~\ref{sec:graph-theory} we quickly discussed the relation
between Hoffman's bound and the Lov\'asz $\vartheta$-number. We now
attempt to develop an analogous theory that relates the spectral
bounds we presented for the chromatic number and independence ratio of
operators with suitable generalizations of the Lov\'asz
$\vartheta$-number for some classes of infinite graphs. This theory is
based on transfering arguments, mainly due Lov\'asz, from the finite
into our infinite setting. So in a sense, our contribution here is to
come up with appropriate definitions.

In what follows we shall work with measurable graphs. Let~$V$
be a topological space and~$\mu$ be a Borel probability measure
on~$V$. A graph~$G = (V, E)$ is \emph{measurable} if~$E$ is measurable
as a subset of the product space~$V \times V$.

The \emph{independence ratio} of~$G$ is defined as
\[
\overline{\alpha}(G) = \sup\{\, \mu(I) : \text{$I$ a measurable
  independent set of~$G$}\, \}.
\]

The \emph{measurable chromatic number} of~$G$, denoted by~$\chim(G)$,
is the minimum~$k$ such that~$V$ can be partitioned into~$k$
measurable independent sets.

We say that an operator~$A\colon L^2(V) \to L^2(V)$
\emph{respects}~$G$ if measurable independent sets of~$G$ are also
independent sets of~$A$. Notice that in this case
\[
\overline{\alpha}(G) \leq \overline{\alpha}(A)\qquad\text{and}\qquad
\chi(A) \leq \chim(G).
\]

So from any operator that respects~$G$ we may obtain bounds for the
independence ratio and the measurable chromatic number of~$G$. In
fact, as long as~$\overline{\alpha}(G) > 0$, we get from
Theorem~\ref{th:alpha_bound} that
\begin{equation}
\label{eq:alpha-best}
\overline{\alpha}(G) \leq \inf_A \frac{-m(A)}{M_1(A) -
  m(A)},
\end{equation}
where the infimum is taken over all nonzero, bounded, and self-adjoint
operators~$A\colon L^2(V) \to L^2(V)$ that respect~$G$ and are such
that~$A 1_V = M_1(A) 1_V$ and~$M_1(A) - m(A) > 0$. (Here, we take~$R =
(A 1_V, 1_V)$ and hence~$\varepsilon = 0$ in the statement of the theorem.)

Similarly, from Theorem~\ref{th:chromatic bound} we see
that, if~$\chim(G) < \infty$, then
\begin{equation}
\label{eq:chi-best}
\chim(G) \geq \sup_A \frac{M(A) - m(A)}{-m(A)},
\end{equation}
where the supremum is taken over all nonzero, bounded, and
self-adjoint operators~$A\colon L^2(V) \to L^2(V)$ that respect~$G$.

For some vertex-transitive measurable graphs we will see that both the
infimum in~\eqref{eq:alpha-best} and the supremum
in~\eqref{eq:chi-best} correspond to natural generalizations of the
Lov\'asz $\vartheta$-number to measurable graphs. We will also show
that the product of the infimum and the supremum in this case
equals~$1$, showing that this property of the $\vartheta$-number of
finite graphs carries over to this setting.

An \emph{automorphism} of~$G$ is a measure preserving
bijection~$\varphi\colon V \to V$ (i.e., both~$\varphi$ and its
inverse are measure preserving) that preserves the adjacency
relation, that is, $\{\varphi(v), \varphi(w)\} \in E$ if and only
if~$\{v, w\} \in E$.

We say that~$G$ is \emph{vertex-transitive} if there is a
subgroup~$\Tcal$ of the automorphism group of~$G$ that is a
topological group, acts continuously on~$V$ (i.e., the ``action
map''~$(T, v) \mapsto T \cdot v$ is a continuous function), and acts
transitively on~$V$. We call any such group~$\Tcal$ a
\emph{transitivity group} of~$G$.

In what follows, if~$A\colon L^2(V) \to L^2(V)$ is a bounded,
self-adjoint operator, we write~$A \succeq 0$ if~$A$ is a
\emph{positive operator}, that is, if $(A f, f) \geq 0$ for all~$f \in
L^2(V)$, or equivalently, if the numerical range of~$A$ is
nonnegative. We will denote by~$I$ the identity operator and
by~$J\colon L^2(V) \to L^2(V)$ the Hilbert-Schmidt operator such that
\[
J f = (f, 1_V) 1_V
\]
for all~$f \in L^2(V)$.

\begin{theorem}
\label{thm:gen-theta}
Let~$G = (V, E)$ be a measurable graph with positive independence
ratio.  The infimum in~\eqref{eq:alpha-best} is at least
\begin{equation}
\label{eq:theta-gen}
\begin{array}{rl}
\inf&\lambda\\
&\lambda I + Z - J \succeq 0,\\
&\text{$Z\colon L^2(V) \to L^2(V)$ is a bounded, self-adjoint}\\
&\qquad\text{operator that respects~$G$,}
\end{array}
\end{equation}
with equality when~$G$ is vertex-transitive with a compact
transitivity group and the infimum above is~$< 1$.
\end{theorem}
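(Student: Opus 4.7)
My plan is to prove the inequality by an explicit construction and the equality under vertex-transitivity by a Haar-averaging argument.

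For the inequality \eqref{eq:alpha-best} $\geq$ \eqref{eq:theta-gen}, given any $A$ feasible in \eqref{eq:alpha-best}, I would set
\[
Z = \frac{A}{M_1(A) - m(A)}, \qquad \lambda = \frac{-m(A)}{M_1(A) - m(A)}.
\]
Then $Z$ respects $G$ automatically, being a positive scalar multiple of $A$. To check $\lambda I + Z - J \succeq 0$, I would decompose an arbitrary $f \in L^2(V)$ as $f = \alpha 1_V + g$ with $g \perp 1_V$, and use $A 1_V = M_1(A) 1_V$ together with self-adjointness to get $(Jf,f) = |\alpha|^2$ and $(Af,f) = |\alpha|^2 M_1(A) + (Ag,g)$. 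A one-line computation then shows that $(M_1(A) - m(A))\cdot((\lambda I + Z - J)f,f)$ collapses to $(Ag,g) - m(A)\|g\|^2 \geq 0$, establishing feasibility of $(Z,\lambda)$ for \eqref{eq:theta-gen}.

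For the equality, assume $G$ is vertex-transitive with compact transitivity group $\Tcal$ and let $(Z, \lambda)$ be feasible for \eqref{eq:theta-gen} with $\lambda < 1$. Writing $U_T$ for the unitary on $L^2(V)$ induced by $T \in \Tcal$ and normalizing the Haar measure on $\Tcal$, I would form the Bochner integral
\[
Z^\sharp = \int_\Tcal U_T Z U_T^* \, dT,
\]
which is well-defined because $\Tcal$ is compact and acts continuously. Three properties hold by standard averaging: (i) $Z^\sharp$ respects $G$, since each $T$ permutes measurable independent sets of $G$ and the integrand vanishes whenever $f$ is supported on such a set; (ii) $\lambda I + Z^\sharp - J \succeq 0$, since $U_T 1_V = 1_V$ makes both $I$ and $J$ commute with every $U_T$, and the cone of positive operators is closed under averaging; and (iii) $Z^\sharp 1_V = \mu 1_V$ with $\mu = (Z 1_V, 1_V)$, because $Z^\sharp$ commutes with every $U_T$, so $Z^\sharp 1_V$ is $\Tcal$-invariant and transitivity forces $\Tcal$-invariant functions to be constants almost everywhere. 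Setting $A = Z^\sharp$, testing the positivity against $1_V$ yields $\mu \geq 1 - \lambda > 0$, while testing against $g \perp 1_V$ yields $(Z^\sharp g, g) \geq -\lambda\|g\|^2$; combining these via the orthogonal decomposition (the cross terms vanish since $Z^\sharp 1_V \in \mathrm{span}(1_V)$) gives $m(A) \geq -\lambda$, hence $M_1(A) - m(A) \geq \mu + \lambda \geq 1 > 0$, and a routine algebraic manipulation using $\mu \geq 1 - \lambda$ produces $-m(A)/(M_1(A) - m(A)) \leq \lambda$.

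The main obstacle I anticipate is ensuring that $A = Z^\sharp$ is actually a valid choice for \eqref{eq:alpha-best}, i.e.\ that $A \neq 0$ and $M_1(A) - m(A) > 0$; both conditions follow from the hypothesis $\lambda < 1$, because $Z^\sharp = 0$ would force $\lambda I - J \succeq 0$ and hence $\lambda \geq 1$. A secondary technical issue is that the infimum in \eqref{eq:theta-gen} need not be attained, in which case one applies the averaging argument to a minimizing sequence $(Z_n, \lambda_n)$ and passes to the limit, which is harmless since every inequality above survives the passage.
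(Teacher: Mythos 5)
Your proposal follows essentially the same route as the paper: for the inequality, the explicit feasible pair $\lambda = -m(A)/(M_1(A)-m(A))$, $Z = A/(M_1(A)-m(A))$ (the paper normalizes $m(A)=-1$ first, but the computation is identical), and for the equality, averaging over the compact transitivity group --- your $Z^\sharp$ is exactly the paper's symmetrization $\Rcal_\Tcal(Z)$, used for exactly the same three purposes. One intermediate claim is stated backwards: from $m(A) \geq -\lambda$ you can only conclude $M_1(A) - m(A) \leq \mu + \lambda$, not $\geq$. The fact you actually need, $M_1(A)-m(A)>0$, is still true, but it should be obtained from $m(A)\leq 0$ (which holds because $A=Z^\sharp$ respects $G$ and $G$ has an independent set $I$ of positive measure, so $(A1_I,1_I)=0$) together with $M_1(A)=\mu\geq 1-\lambda>0$; the final bound $-m(A)/(M_1(A)-m(A))\leq\lambda$ then follows since $x\mapsto x/(\mu+x)$ is increasing on $[0,\lambda]$ and $\mu+\lambda\geq 1$. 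This is exactly how the paper closes the argument (it additionally records the useful observation that any feasible $(\lambda,Z)$ forces $\mu(I)\leq\lambda$, hence $\lambda>0$, which disposes of degenerate cases you leave implicit).
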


When~$G$ is a finite graph, problem~\eqref{eq:theta-gen} is one of the
formulations for~$\vartheta(G)$. Namely, it is the dual of the
semidefinite programming problem~\eqref{eq:sdp}.

To prove the second part of the theorem, namely that when~$G$ has a
compact transitivity group, then we have equality, we will need to
symmetrize an operator with respect to the transitivity group. This
operation will be used again later, so we present it now.

Suppose the measurable graph~$G = (V, E)$ has a compact transitivity
group. Let~$\Tcal$ be such a group, and denote by~$\nu$ the Haar
measure on~$\Tcal$, normalized such that~$\nu(\Tcal) = 1$. For~$T \in
\Tcal$, denote the right action on $L^2(V)$ by
\[
(f^T)(x) = f(T \cdot x) \quad \text{with } f \in L^2(V).
\]
If~$A\colon L^2(V) \to L^2(V)$ is a bounded operator, we
denote by~$\Rcal_\Tcal(A)$ the operator such that
\[
(\Rcal_\Tcal(A) f)(x) = \int_\Tcal (Af^T)^{T^{-1}}(x) \, d\nu(T).
\]
Note~$\Rcal_\Tcal(A)$ is a bounded operator, and that~$\Rcal_\Tcal(A)$
is self-adjoint if~$A$ is self-adjoint. Note also that
$\Rcal_\Tcal(A)$ is a positive operator when $A$ is.

Two properties we use of the symmetrized operator are the
following. First, if~$A$ respects~$G$, then also
does~$\Rcal_\Tcal(A)$, as can be easily seen from the fact
that~$\Tcal$ is a subgroup of the automorphism group
of~$G$. Second,~$\Rcal_\Tcal(A)$ always has~$1_V$ as an
eigenfunction. Indeed, if~$x$, $y \in V$, then for some~$U \in \Tcal$
we have~$x = U \cdot y$. Then, using the invariance of the Haar measure
and using~$1^U_V = 1_V$,
\[
\begin{split}
(\Rcal_\Tcal(A) 1_V)(x) &= (\Rcal_\Tcal(A) 1_V)(U \cdot y)\\
&= \int_\Tcal (A 1_V^T)(T^{-1} U \cdot y)\, d\nu(T)\\
&= \int_\Tcal (A (1_V^U)^T)(T^{-1} \cdot y)\, d\nu(T)\\
&= \int_\Tcal (A 1_V^T)(T^{-1} \cdot y)\, d\nu(T)\\
&= (\Rcal_\Tcal(A) 1_V)(y),
\end{split}
\]
as we wanted.

\begin{proof}[Proof of Theorem~\ref{thm:gen-theta}]
Let~$A\colon L^2(V) \to L^2(V)$ be a bounded, self-adjoint operator
that respects~$G$. Assume moreover~$A 1_V = M_1(A) 1_V$ and~$M_1(A) -
m(A) > 0$. Since
\[
0 < \overline{\alpha}(G) \leq \overline{\alpha}(A) \leq
\frac{-m(A)}{M_1(A) - m(A)},
\]
we see that~$m(A) < 0$. So we may assume that~$m(A) = -1$.

We claim that~$\lambda = (M_1(A) - m(A))^{-1}$ and~$Z = \lambda A$ form
a feasible solution of problem~\eqref{eq:theta-gen}. Obviously,~$Z$
respects $G$. We show that~$\lambda I + Z - J \succeq 0$.

For this, write~$X = \lambda I + Z$. Since~$m(A) = -1$, we have~$X
\succeq 0$. Now take~$f \in L^2(V)$ and say~$f = \beta 1_V + w$ for
some~$w$ orthogonal to~$1_V$. Then
\[
\begin{split}
((\lambda I + Z - J) f, f) &= (X(\beta 1_V + w), \beta 1_V + w) -
(J(\beta 1_V + w), \beta 1_V + w)\\
&= |\beta|^2 (X 1_V, 1_V) + \Real(2\beta (X 1_V, w)) + (Xw, w)\\
&\qquad{} - (|\beta|^2 (J 1_V, 1_V) + \Real(2\beta (J 1_V, w)) + (J w, w))\\
&=|\beta|^2 + (Xw, w) - |\beta|^2\\
&\geq 0.
\end{split}
\]
Here, we used the fact that~$1_V$ is an eigenfunction of~$X$ with
eigenvalue~$1$ and that~$(1_V, w) = 0$. We also used that~$X \succeq
0$, since then~$(Xw, w) \geq 0$.

So we have that the optimal value of~\eqref{eq:theta-gen} is at
most~$\lambda = -m(A) / (M_1(A) -m(A))$ since~$m(A) = -1$, proving the
inequality we wanted.

Now we show that equality holds when~$G$ is vertex-transitive. We
start by observing that, in general, any feasible solution
of~\eqref{eq:theta-gen} gives an upper bound
to~$\overline{\alpha}(G)$. Indeed, let~$I$ be a measurable independent
set of~$G$ with~$\mu(I) > 0$. Then
\[
0 \leq ((\lambda I + Z - J) 1_I, 1_I) = \lambda \mu(I) - \mu(I)^2,
\]
and dividing by~$\mu(I)$ we obtain~$\mu(I) \leq \lambda$.

So let~$\lambda < 1$ and~$Z$ be a feasible solution
of~\eqref{eq:theta-gen}. From the above observation, we have~$\lambda
> 0$. Since~$\lambda < 1$, we know that~$Z$ is nonzero.  We may also
assume that~$1_V$ is an eigenfunction of~$Z$, for if not then we
replace $Z$ with $\Rcal_\Tcal(Z)$, where~$\Tcal$ is a compact
transitivity group of~$G$. Then
\[
\Rcal_\Tcal(\lambda I + Z - J) = \lambda I + \Rcal_\Tcal(Z) - J
\succeq 0
\]
implies that $\lambda$ and $\Rcal_\Tcal(Z)$ are also feasible for~\eqref{eq:theta-gen}
and~$\Rcal_\Tcal(Z)$ has~$1_V$ as an eigenfunction.

Then~$Z$ is a nonzero, bounded, self-adjoint operator that
respects~$G$. If~$Z 1_V = M_1(Z) 1_V$, then we have
\[
M_1(Z) = (Z 1_V, 1_V) \geq (J 1_V, 1_V) - \lambda (I 1_V, 1_V) = 1 -
\lambda.
\]
So we see that~$M_1(Z) > 0$, as~$\lambda < 1$.

We also have~$m(Z) \geq -\lambda$. Indeed, let~$f \in L^2(V)$ with
$\|f\| \leq 1$ and
write~$f = \alpha 1_V + w$, where~$w$ is orthogonal to~$1_V$. Then,
since~$1_V$ is an eigenfunction of~$Z$, we get
\[
(Z f, f) = (Z (\alpha 1_V + w), \alpha 1_V + w) = \alpha^2 (Z 1_V,
1_V) + (Z w, w) \geq (Z w, w),
\]
and we get from~$\lambda I + Z - J \succeq 0$ that
\[
(Zf, f) \geq (Zw, w) \geq -\lambda (w, w) \geq -\lambda.
\]

Notice that we must have~$m(Z) < 0$. Indeed, if~$m(Z) > 0$,
then~$(Z 1_I, 1_I) > 0$ for any measurable independent set~$I$ of~$G$
with~$\mu(I) > 0$, a contradiction since~$Z$ respects~$G$. If~$m(Z) =
0$, then~$M_1(Z) - m(Z) = M_1(Z) > 0$, and we have
by~\eqref{eq:alpha-best} that~$\overline{\alpha}(G) \leq 0$, a
contradiction since we assume~$G$ has positive independence ratio.

Hence, since~$M_1(Z) \geq 1 - \lambda$ and~$0 < -m(Z) \leq \lambda$ we
have
\[
\frac{M_1(Z) - m(Z)}{-m(Z)} = 1 + \frac{M_1(Z)}{-m(Z)} \geq
\frac{1}{\lambda},
\]
showing that the infimum in~\eqref{eq:alpha-best} is at
most~$\lambda$.
\end{proof}

Given~$a \in L^\infty(V)$, we denote by~$D_a\colon L^2(V) \to L^2(V)$
the multiplication operator~$(D_a f)(x) = a(x) f(x)$ for~$f \in
L^2(V)$. Note~$D_a$ is a bounded operator. The following theorem
connects~\eqref{eq:chi-best} with the Lov\'asz $\vartheta$-number.

\begin{theorem}
\label{thm:gen-thetabar}
Let~$G = (V, E)$ be a measurable graph with finite measurable
chromatic number. The supremum in~\eqref{eq:chi-best} is at most
\begin{equation}
\label{eq:thetabar-gen}
\begin{array}{rl}
\sup&((D_a + K) 1_V, 1_V)\\
&D_a + K \succeq 0,\\
&\text{$a \in L^\infty(V)$ and~$\int_V a(x)\, d\mu(x) =
  1$},\\
&\text{$K\colon L^2(V) \to L^2(V)$ is a bounded, self-adjoint}\\
&\qquad\text{operator that respects~$G$,}
\end{array}
\end{equation}
with equality when~$G$ is vertex-transitive with a compact
transitivity group and the supremum above is~$> 1$.
\end{theorem}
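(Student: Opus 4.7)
For the ``$\leq$'' direction, fix a nonzero bounded self-adjoint operator $A$ on $L^2(V)$ that respects $G$ and, using the scale invariance of the Hoffman ratio, assume $m(A) = -1$ so that the ratio equals $1 + M(A)$. Mimicking the strategy of the proof of Theorem~\ref{thm:gen-theta}, I would build a feasible pair for~\eqref{eq:thetabar-gen} from an approximate top eigenfunction of $A$: for any real $\phi \in L^\infty(V)$ with $\|\phi\|_{L^2} = 1$, set
\[
a = \phi^2 \in L^\infty(V), \qquad K = D_\phi A D_\phi.
\]
Then $\int a\,d\mu = \|\phi\|^2 = 1$, the operator $K$ is bounded and self-adjoint, and $K$ respects $G$ because $\phi f$ vanishes outside any set on which $f$ does, so $(Kf, f) = (A(\phi f), \phi f) = 0$ whenever $f$ is supported on a measurable independent set. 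The PSD condition follows from the estimate
\[
((D_a + K) f, f) = \int \phi^2 |f|^2\, d\mu + (A(\phi f), \phi f) \geq (1 + m(A))\|\phi f\|^2 = 0,
\]
and the objective at this pair equals $1 + (A\phi, \phi)$. Since $L^\infty(V)$ is dense in $L^2(V)$ on a probability space and $A$ is bounded, $\phi$ may be chosen so that $(A\phi, \phi)$ is arbitrarily close to $M(A)$; hence the value of~\eqref{eq:thetabar-gen} is at least $1 + M(A)$, which is the Hoffman ratio of $A$, and taking the supremum over $A$ yields the inequality.

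For the equality direction, assume $G$ has a compact transitivity group $\Tcal$ and let $(a, K)$ be feasible in~\eqref{eq:thetabar-gen} with objective $v > 1$. Using the invariance of the Haar measure on $\Tcal$ and transitivity, $\Rcal_\Tcal(D_a) = D_{\bar a}$ where $\bar a$ is the constant function equal to $\int_V a\,d\mu = 1$, so $\Rcal_\Tcal(D_a) = I$; hence $(1, \Rcal_\Tcal(K))$ remains feasible with the same objective $v$. Setting $A := \Rcal_\Tcal(K)$, the two properties of the symmetrization recalled in the excerpt give that $A$ respects $G$ and $A 1_V = \mu_1 1_V$ with $\mu_1 = (K 1_V, 1_V) = v - 1 > 0$, while $I + A \succeq 0$ yields $m(A) \geq -1$. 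I then need $m(A) < 0$: if instead $A \succeq 0$, partition $V$ into measurable independent sets $C_1, \dots, C_k$ (using $\chim(G) < \infty$), and from $(A 1_{C_i}, 1_{C_i}) = 0$ combined with $A \succeq 0$ deduce $A^{1/2} 1_{C_i} = 0$, hence $A 1_{C_i} = 0$ for every $i$, so $A 1_V = 0$ contradicts $\mu_1 > 0$. Using $M(A) \geq \mu_1 > 0$ and $-m(A) \leq 1$,
\[
\frac{M(A) - m(A)}{-m(A)} = 1 + \frac{M(A)}{-m(A)} \geq 1 + \mu_1 = v,
\]
so the supremum in~\eqref{eq:chi-best} dominates $v$, and letting $v$ approach the value of~\eqref{eq:thetabar-gen} proves the reverse inequality.

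The main obstacle is locating the feasible pair $(a, K) = (\phi^2, D_\phi A D_\phi)$ for the ``$\leq$'' direction: conjugating $A$ by the multiplication operator $D_\phi$ simultaneously preserves the property of respecting $G$, makes the positive semi-definite estimate tight on vectors of the form $\phi f$, and places the quantity $(A\phi, \phi)$ into the objective so that one may let $\phi$ approach a top eigenfunction of $A$. With this pair identified, both directions parallel the corresponding steps in the proof of Theorem~\ref{thm:gen-theta}.
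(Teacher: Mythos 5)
Your argument is, in both directions, essentially the paper's own proof: the feasible pair $a=|f|^2$, $K=D_f^*AD_f$ with $f$ an approximate top eigenfunction in $L^\infty(V)$ is exactly the construction used there, and the equality direction (symmetrize to $a=1_V$, read off $m\geq-1$ from $I+\Rcal_\Tcal(K)\succeq 0$ and $M\geq(K1_V,1_V)=v-1$) follows the same route; you even supply a detail the paper delegates to an earlier remark, namely the verification via $A^{1/2}$ that $m(\Rcal_\Tcal(K))<0$.

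One step as written does not hold in full generality: you restrict to \emph{real} $\phi$ and claim $(A\phi,\phi)$ can be made arbitrarily close to $M(A)$. Since $L^2(V)$ here is the complex Hilbert space, a bounded self-adjoint $A$ respecting $G$ need not have its numerical range exhausted by real test functions. For instance, on $V=\{1,2\}$ with the single-edge graph, the operator with matrix $\left(\begin{smallmatrix}0 & i\\ -i & 0\end{smallmatrix}\right)$ is self-adjoint, respects $G$, and has $M(A)=1$, yet $(A\phi,\phi)=0$ for every real $\phi$; your construction would then only certify the value $1$ rather than the Hoffman ratio $2$. The repair is immediate and is what the paper does: take $\phi\in L^\infty(V)$ complex with $\|\phi\|=1$, set $a=|\phi|^2$ and $K=D_\phi^*AD_\phi$; the feasibility computation $D_a+K=D_\phi^*(I+A)D_\phi\succeq 0$ and the objective $1+(A\phi,\phi)$ go through unchanged. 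With that substitution your proof coincides with the paper's.
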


Again, when~$G$ is a finite graph,
problem~\eqref{eq:thetabar-gen} corresponds to the semidefinite
programming problem~\eqref{eq:sdp} applied to the complement
of~$G$. So for a finite graph~$G$ the optimization problem above gives
us~$\vartheta(\overline{G})$.

\begin{proof} 
Let~$A\colon L^2(V) \to L^2(V)$ be a nonzero, bounded, self-adjoint
operator that respects~$G$. Recall from Section~\ref{sec:chi-op}
that~$M(A) > 0 > m(A)$. So we may assume that~$m(A) = -1$.

Fix~$\varepsilon > 0$ and let~$f \in L^\infty(V)$ be such that~$(A f,
f) > M(A) - \varepsilon$ and~$\|f\| = 1$ (such an~$f$ exists,
because~$L^\infty(V)$ is dense in~$L^2(V)$ and~$A$ is
bounded). Let~$a(x) = |f(x)|^2$ and set~$K = D_f^* A D_f$. Notice~$K$
is a bounded and self-adjoint operator and~$\int_V a(x)\, d\mu(x)
= 1$. Also, by construction~$K$ respects~$G$ and~$D_a + K \succeq 0$,
since~$D_a + K = D_f^* (I + A) D_f$ and~$I + A \succeq 0$ as~$m(A) =
-1$.

So~$a$ and~$K$ form a feasible solution
of~\eqref{eq:thetabar-gen}. Moreover
\[
\begin{split}
((D_a + K) 1_V, 1_V) &= (D_a 1_V, 1_V) + (D_f^* A D_f 1_V, 1_V)\\
&= 1 + (A D_f 1_V, D_f 1_V)\\
&= 1 + (A f, f)\\
&\geq 1 + M(A) - \varepsilon,
\end{split}
\]
and taking~$\varepsilon \to 0$ we obtain that the optimal
of~\eqref{eq:thetabar-gen} is at least
\[
1 + M(A) = \frac{M(A) - m(A)}{-m(A)},
\]
as we wanted.

Now we show that when~$G$ is vertex-transitive, then equality holds,
as long as the optimal value of~\eqref{eq:thetabar-gen} is~$>
1$. Indeed, let~$a$ and~$K$ be a feasible solution
of~\eqref{eq:thetabar-gen} with~$K$ nonzero. Such solution must exist,
since the optimal value of our problem is greater than~$1$.

Then~$K$ is a nonzero, bounded, self-adjoint operator that
respects~$G$. We may assume that~$a = 1_V$, so that~$D_a = I$. Indeed,
let~$\Tcal$ be a compact transitivity group of~$G$. Then~$\overline{a}
= 1_V$ and~$\overline{K} = \Rcal_\Tcal(K)$ form a feasible solution
of~\eqref{eq:thetabar-gen} (note~$\Rcal_\Tcal(D_a) = I$, since~$G$ is
vertex transitive). Also,~$((D_{\overline{a}} + \overline{K}) 1_V,
1_V) = ((D_a + K) 1_V, 1_V)$, so we may take the symmetrized solution
instead of the original one.

Then, since~$a = 1_V$, we have~$(D_a f, f) \leq 1$ for all~$f \in
L^2(V)$ with $\|f\| \leq 1$. This, together with~$D_a + K \succeq 0$, implies that~$m(K)
\geq -1$. Then since~$M(K) \geq (K 1_V, 1_V)$, we have
\[
\frac{M(K) - m(K)}{-m(K)} \geq 1 + M(K) \geq 1 + (K 1_V, 1_V) = ((D_a
+ K) 1_V, 1_V),
\]
proving the equality.
\end{proof}

When~$G = (V, E)$ is a finite graph, we have~$\vartheta(G)
\vartheta(\overline{G}) \geq |V|$, with equality when~$G$ is
vertex-transitive. We finish this section by observing that the same
holds for the optimal values of~\eqref{eq:theta-gen}
and~\eqref{eq:thetabar-gen}, showing that this property carries on to
our setting.

\begin{theorem}
\label{th:transitive}
Let~$G = (V, E)$ be a measurable graph with positive independence
ratio and finite measurable chromatic number. Let~$\theta$ be the optimal
value of~\eqref{eq:theta-gen} and~$\tilde{\theta}$ be the optimal
value of~\eqref{eq:thetabar-gen}. Then~$\theta \cdot \tilde{\theta} \geq
1$, with equality when~$G$ has a compact transitivity group.
\end{theorem}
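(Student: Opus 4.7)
My plan is to handle the inequality $\theta \cdot \tilde{\theta} \geq 1$ and the equality statement for vertex-transitive graphs separately, with the equality following quickly from the inequality.

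For the inequality, I would prove the pointwise bound
\[
\lambda \cdot ((D_a + K) 1_V, 1_V) \geq 1
\]
for every feasible $(\lambda, Z)$ of \eqref{eq:theta-gen} and $(a, K)$ of \eqref{eq:thetabar-gen}, so that taking the infimum in $(\lambda, Z)$ and the supremum in $(a, K)$ yields $\theta \tilde{\theta} \geq 1$. The starting point is the conjugated operator inequality $(D_a + K)^{1/2}(\lambda I + Z - J)(D_a + K)^{1/2} \succeq 0$; evaluating at $1_V$ and setting $g = (D_a + K)^{1/2} 1_V$, so that $\|g\|^2 = ((D_a + K) 1_V, 1_V)$, this gives
\[
\lambda \|g\|^2 + (Zg, g) \geq (g, 1_V)^2.
\]
The remaining task is to control $(Zg, g)$ and $(g, 1_V)^2$ using that both $Z$ and $K$ vanish on indicators of measurable independent sets. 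The natural template is Lov\'asz's proof of $\vartheta(G)\vartheta(\overline{G}) \geq n$ via orthonormal representations: form the PSD tensor operator $(\lambda I + Z - J) \otimes (D_a + K)$ on $L^2(V \times V)$ and pair it with a ``diagonal'' test object so that the support properties of $Z$ and $K$ force the off-diagonal contributions to collapse. In a non-atomic measurable setting the diagonal is a nullset, so some approximation or discrete reduction may be needed.

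For the equality, assume $G$ has a compact transitivity group $\Tcal$ and use the just-proved inequality to reduce to showing $\theta \leq 1/\tilde{\theta}$. Given an $\varepsilon$-optimal feasible $(a, K)$ for \eqref{eq:thetabar-gen}, apply the symmetrization $\Rcal_\Tcal$ defined before Theorem~\ref{thm:gen-theta}. Since $\Tcal$ acts transitively and $\mu$ is the unique $\Tcal$-invariant probability measure on $V$, the orbit average of $a$ equals the constant $\int_V a\, d\mu = 1$, so $\Rcal_\Tcal(D_a) = I$; the symmetrized $\overline{K} = \Rcal_\Tcal(K)$ still respects $G$, still satisfies $I + \overline{K} = \Rcal_\Tcal(D_a + K) \succeq 0$, and preserves the objective value, so we may replace $(a, K)$ by $(1_V, \overline{K})$. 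Invariance then forces $\overline{K} 1_V = (c - 1) 1_V$ with $c = ((I + \overline{K}) 1_V, 1_V) \geq \tilde{\theta} - \varepsilon$. Now set $\lambda = 1/c$ and $Z = \overline{K}/c$: then $Z$ respects $G$ by scaling, and on the orthogonal splitting $L^2(V) = \C \cdot 1_V \oplus 1_V^\perp$ one checks that $\lambda I + Z - J = (I + \overline{K})/c - J$ vanishes on the $1_V$-summand (eigenvalue $c/c = 1$ cancels $J$) and equals $(I + \overline{K})/c \succeq 0$ on $1_V^\perp$ (where $J$ vanishes), with no cross term since $\overline{K} 1_V$ is a multiple of $1_V$. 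This exhibits a feasible solution of \eqref{eq:theta-gen} with value $1/c \leq 1/(\tilde{\theta} - \varepsilon)$; letting $\varepsilon \to 0$ gives $\theta \leq 1/\tilde{\theta}$, and combining with the inequality yields $\theta \tilde{\theta} = 1$.

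The main obstacle is the general inequality itself. The elementary bounds $\theta \geq \overline{\alpha}(G)$ (from the proof of Theorem~\ref{th:alpha_bound}) and $\tilde{\theta} \leq \chim(G)$ (which one derives by applying Cauchy--Schwarz to the PSD Gram matrix $L_{ij} = ((D_a + K) 1_{C_i}, 1_{C_j})$ for a partition into $\chim(G)$ measurable independent sets, using $L_{ii} = \int_{C_i} a$ and $(K 1_{C_i}, 1_{C_i}) = 0$), combined with the trivial $\overline{\alpha}(G) \cdot \chim(G) \geq 1$, only show that both $\theta$ and $1/\tilde{\theta}$ are bounded below by $1/\chim(G)$; this is not enough to conclude $\theta \geq 1/\tilde{\theta}$. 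A genuinely joint argument exploiting the positivity of $\lambda I + Z - J$ and of $D_a + K$ simultaneously is required, which in the finite case collapses to $\vartheta(G) \vartheta(\overline{G}) \geq n$ and is most naturally proved via the tensor-product / orthonormal representation construction sketched above.
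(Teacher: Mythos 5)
Your treatment of the equality case is essentially the paper's: symmetrize $(a,K)$ to $(1_V,\overline{K})$ with $\overline{K}=\Rcal_\Tcal(K)$, use that $\overline{K}1_V$ is a multiple of $1_V$, and rescale by the objective value $c$ to get a feasible solution $\lambda=c^{-1}$, $Z=c^{-1}\overline{K}$ of \eqref{eq:theta-gen}; the orthogonal-splitting verification you give is exactly the one in the paper. The problem is the general inequality $\theta\cdot\tilde{\theta}\geq 1$, where your plan has a genuine gap in two respects. First, the pointwise bound $\lambda\cdot((D_a+K)1_V,1_V)\geq 1$ for \emph{every} pair of feasible solutions is false: $(a,K)=(1_V,0)$ is always feasible for \eqref{eq:thetabar-gen} with objective value $1$, while \eqref{eq:theta-gen} admits feasible $\lambda<1$ whenever $\theta<1$ (which is the typical situation, since $\theta$ is meant to approximate $\overline{\alpha}(G)$). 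What you actually need is the much weaker statement that for each feasible $\lambda$ there \emph{exists} a feasible $(a,K)$ with objective at least $\lambda^{-1}$. Second, you then fall back on a Lov\'asz-style tensor-product/orthonormal-representation argument which you only sketch and yourself flag as the main obstacle; no such machinery is needed.

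The paper's argument for the inequality is a one-line construction in the direction opposite to the one you use for the equality case, and it requires no symmetrization. Given $(\lambda,Z)$ feasible for \eqref{eq:theta-gen} (so $\lambda>0$ because any independent set $I$ of positive measure gives $0\leq((\lambda I+Z-J)1_I,1_I)=\lambda\mu(I)-\mu(I)^2$), set $a=1_V$ and $K=\lambda^{-1}Z$. Then $K$ respects $G$, and $\lambda I+Z\succeq J\succeq 0$ gives $D_a+K=\lambda^{-1}(\lambda I+Z)\succeq 0$, so $(a,K)$ is feasible for \eqref{eq:thetabar-gen}; moreover
\[
((D_a+K)1_V,1_V)=((I+\lambda^{-1}Z-\lambda^{-1}J)1_V,1_V)+\lambda^{-1}(J1_V,1_V)\geq\lambda^{-1}.
\]
Hence $\tilde{\theta}\geq\lambda^{-1}$ for every feasible $\lambda$, i.e.\ $\tilde{\theta}\geq\theta^{-1}$. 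If you replace your first part with this construction, the rest of your proposal goes through.
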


\begin{proof}
Let~$\lambda$ and~$Z$ be a feasible solution
of~\eqref{eq:theta-gen}. Then~$\lambda > 0$, since~$G$ has a positive
independence ratio. Set~$a = 1_V$, so that~$D_a = I$, and~$K =
\lambda^{-1} Z$. Then we have that~$K$ respects~$G$ and that~$D_a + K
\succeq 0$, so~$a$ and~$K$ are a feasible solution
of~\eqref{eq:thetabar-gen}. Moreover
\[
\begin{split}
((D_a + K) 1_V, 1_V) &= ((I + \lambda^{-1} Z - \lambda^{-1} J) 1_V,
1_V) + (\lambda^{-1} J 1_V, 1_V)\\
&\geq \lambda^{-1} (J 1_V, 1_V)\\
&= \lambda^{-1},
\end{split}
\]
so that~$\tilde{\theta} \geq \lambda^{-1}$, and we see that~$\theta
\cdot \tilde{\theta} \geq 1$.

To see the reverse inequality when~$G$ has a compact transitivity
group~$\Tcal$, let~$a$ and~$K$ be a feasible solution
of~\eqref{eq:thetabar-gen}. Notice~$\overline{a} = 1_V$
and~$\overline{K} = \Rcal_\Tcal(K)$ are also feasible
for~\eqref{eq:thetabar-gen}, and~$((D_{\overline{a}} + \overline{K})
  1_V, 1_V) = ((D_a + K) 1_V, 1_V)$. 

Notice~$D_{\overline{a}} = I$. Set~$\beta = ((I + \overline{K}) 1_V,
1_V)$. We claim~$\lambda = \beta^{-1}$ and~$Z = \beta^{-1} \overline{K}$
form a feasible solution for~\eqref{eq:theta-gen}.

Indeed, notice~$Z$ respects~$G$ by construction. We show that~$\lambda
I + Z - J \succeq 0$. For this, write~$X = I + \overline{K}$ and
let~$f \in L^2(V)$. Write~$f = \alpha 1_V +
w$, where~$w$ is orthogonal to~$f$. Then
\[
\begin{split}
((\lambda I + Z - J) f, f) &= \beta^{-1} (Xf, f) - (Jf, f)\\
&=\beta^{-1} (X(\alpha 1_V + w), \alpha 1_V + w) - (J(\alpha 1_V + w),
\alpha 1_V + w)\\
&=\beta^{-1} (\alpha^2 (X 1_V, 1_V) + 2\alpha(X 1_V, w) + (X w, w))\\
&\qquad{}-(\alpha^2 (J 1_V, 1_V) + 2\alpha(J 1_V, w) + (J w, w))\\
&=\beta^{-1} \alpha^2 \beta + (Xw, w) - \alpha^2\\
&\geq 0,
\end{split}
\]
where we use the fact that~$X \succeq 0$, proving the claim.

So we see that~$\theta \leq \lambda = \beta^{-1}$, and so~$\theta
\cdot \tilde{\theta} \leq 1$, as we wanted.
\end{proof}

\section{Graphs on Euclidean space}
\label{sec:euclidean}

In this section we consider translation invariant graphs defined on
the Euclidean space~$V = \R^n$. Let $N \subseteq \R^n$ be a bounded,
Lebesgue measurable set which does not contain the origin in its
topological closure. Say two vertices $x, y\in \R^n$ are adjacent
whenever $x - y \in N$. To make the adjacency relation symmetric we
require that $N$ is centrally symmetric, i.e.\ $N = -N$. We denote
this graph by~$G(\R^n, N)$.

Our aim is to determine lower bounds for the measurable chromatic
number $\chim(G(\R^n, N))$ of this graph. This is the smallest number
of colors one needs to paint all points of~$\R^n$ so that two points
which are adjacent receive different colors and all points having the
same color form measurable sets. This number is finite since $N$ is
bounded and since every point is in an open set of positive measure
which does not contain any of its neighbors in the graph.

In general, finding $\chim(G(\R^n, N))$ is a notoriously difficult
problem: It has been intensively studied for the unit sphere $N =
S^{n-1} = \{x \in \R^n : x \cdot x = 1\}$ and there even in the planar
case the number is only known to lie between five and seven; see
Soifer \cite{Soifer} and Sz\'ekely \cite{Szekely} for the history of
this problem.

The technique we present is a common extension of the technique of
Steinhardt~\cite{Steinhardt} who used it to show that
\begin{equation*}
\lim_{N \to \infty} \chim\left(G\left(\R^2, \bigcup_{k = 0}^N (2k+1)S^1\right)\right) = \infty,
\end{equation*}
and an extension of the technique of Oliveira and Vallentin~\cite{OliveiraVallentin} who gave
an upper bound for the measurable chromatic number of graphs of the
form
\begin{equation*}
G\left(\R^n, d_1 S^{n-1} \cup d_2 S^{n-1} \cup \cdots \cup d_N S^{n-1}\right),
\end{equation*}
which also led to the best known lower bounds for the measurable
chromatic number of $G(\R^n, S^{n-1})$ in dimensions~$3, \ldots,
24$. We revisit these two examples in Section~\ref{ssec:odddistance}
and Section~\ref{ssec:unitdistance}.

\subsection{Computing lower bounds for measurable chromatic numbers via Fourier analysis}

Let $\nu$ be a signed Borel measure (i.e.\ which does not take the
values $\pm \infty$) with support contained in $N$, and which is centrally
symmetric, i.e.\ $\nu(-S) = \nu(S)$ for all measurable sets~$S$. The
convolution operator $A_\nu \colon L^2(\R^n) \to L^2(\R^n)$ given by
\begin{equation}
\label{eq:Anu}
(A_\nu f)(x) = (f * \nu)(x) = \int_{\R^n} f(x-y) d\nu(y),\;\; \text{for $f \in L^2(\R^n)$,}
\end{equation}
is a bounded operator, by Minkowski's integral inequality (see
e.g.~\cite[Proposition 8.49]{Folland}). The fact that $\nu$ is
centrally symmetric implies that $A_{\nu}$ is self-adjoint:
\[
\begin{split}
(A_{\nu} f, g) & =  \int_{\R^n} \int_{\R^n} f(x-y) \, d\nu(y) \,
\overline{g(x)} \, dx = \int_{\R^n} f(x) \int_{\R^n} \overline{g(x+y)}
\, d\nu(y) \, dx\\
 & = \int_{\R^n} f(x) \int_{\R^n} \overline{ g(x-y) } \, d\nu(y) \, dx = (f, A_{\nu} g).
\end{split}
\]

If $I$ is a measurable independent set of~$G(\R^n, N)$ then it is also
an independent set of any such convolution operator. In fact, let $f
\in L^2(\R^n)$ be a function which vanishes almost everywhere outside
of~$I$. For $x \in I$ and $y \in N$ we cannot have $x-y \in I$, and
therefore $f(x-y) \overline{f(x)} = 0$ for Lebesgue almost
every~$x$. Hence,
\begin{equation*}
(A_{\nu}f, f) = \int_{\R^n} \int_{\R^n} f(x - y) \overline{f(x)} \,
d\nu(y) \, dx =  0.
\end{equation*}

So~$A_\nu$ respects~$G(\R^n,N)$ and $\chi(A_\nu) \leq \chim(G(\R^n,N))$.
Theorem~\ref{th:chromatic bound} then gives 
\begin{equation*}
\frac{M(A_\nu) - m(A_\nu)}{-m(A_\nu)} \leq \chi(A_\nu) \leq \chim(G(\R^n, N))
\end{equation*}
for every signed Borel measure supported on~$N$ which is centrally symmetric.

To determine the numerical range of $A_{\nu}$ we apply the Fourier
transform which by Plancherel's theorem is a unitary operator on
$L^2(\R^n)$:
\begin{equation*}
(A_{\nu} f, f) = (\widehat{A_{\nu} f}, \widehat{f}) = (\widehat{f *
  \nu}, \widehat{f}) = (\widehat{\nu} \widehat{f}, \widehat{f}),
\end{equation*}
where 
\[
\widehat{\nu}(u) = \int_{\R^n} e^{-2\pi i x \cdot u} \, d\nu(x)
\]
is the Fourier transform of the measure $\nu$.

So it suffices to determine the numerical range of the multiplication
operator $g \mapsto \widehat{\nu} g$.
If $g \in L^2(\R^n)$ with $\| g \| = 1$, then clearly
\[
(\widehat{\nu} g, g) = \int | g(x) |^2 \widehat{\nu}(x) dx
\]
lies between $\inf_{u \in \R^n} \widehat{\nu}(u)$ and $\sup_{u \in
  \R^n} \widehat{\nu}(u)$, since $\widehat{\nu}$ is continuous and
bounded. Note that $\widehat{\nu}$ is real-valued because $\nu$ is
centrally symmetric.
If $\epsilon > 0$, then choose $x_0 \in \R^n$ with
$\widehat{\nu}(x_0) \leq \inf_{u \in \R^n} \widehat{\nu}(u) + \epsilon$. If $B_r(x_0)$ denotes
the ball of radius $r$ centered at $x_0$ and $\vol(B_r(x_0))$ its
volume, then by 
\[
\lim_{r \to 0} \int_{\R^n} \widehat{\nu}(x) \left| \frac{1_{B_r(x_0)}}{\sqrt{\vol(B_r(x_0))}} \right|^2  dx =
\lim_{r \to 0} \frac{1}{\vol(B_r(x_0))} \int_{B_r(x_0)} \widehat{\nu}(x) dx = \widehat{\nu}(x_0),
\]
whence $m(A_\nu) \leq \widehat{\nu}(x_0) \leq \inf_{u \in \R^n} \widehat{\nu}(u) + \epsilon$,
and since $\epsilon$ was arbitrary we get $m(A_{\nu}) = \inf_{u \in \R^n} \widehat{\nu}(u)$.
A similar argument shows $M(A_\nu) = \sup_{u \in \R^n} \widehat{\nu}(u)$.

So we finally get
\begin{equation}
\label{eq:chibound2}
\frac{\sup_{u \in \R^n} \widehat{\nu}(u) - \inf_{u \in \R^n}
  \widehat{\nu}(u)}{-\inf_{u \in \R^n}\widehat{\nu}(u)} \leq
\chim(G(\R^n, N)).
\end{equation}

To get the best possible bound from this approach, one can optimize
over all measures $\nu$ having the required properties.

\subsection{Computing upper bounds for upper densities via Fourier analysis}

Independent sets in~$G(\R^n, N)$ might have infinite Lebesgue
measure. So, while it does not make sense to look for upper bounds for
the measure of independent sets, we might look for upper bounds for
their upper density, a measure of the fraction of space they
cover. Given a measurable set~$S \subseteq \R^n$, its \emph{upper
  density} is
\[
\overline{\delta}(S) = \limsup_{r \to \infty} \frac{\vol(S \cap
  B_r)}{\vol(B_r)},
\]
where~$B_r$ is the ball of radius~$r$ centered at the origin.

By using the upper density, we may extend the definition of
independence ratio also to the graph~$G(\R^n, N)$. We simply put
\[
\overline{\alpha}(G(\R^n, N)) = \sup\{\, \overline{\delta}(I) :
\text{$I$ independent set of~$G(\R^n, N)$}\,\}.
\]

Now let~$\nu$ be a signed Borel measure with support in~$N$ that is
centrally symmetric. Recall from the previous section that every
independent set of~$G(\R^n, N)$ is also an independent set of the
convolution operator~$A_\nu$ defined in~\eqref{eq:Anu}.

For~$r > 0$, let~$B_r$ be the ball of radius $r$ centered at the
origin. We view $B_r$ as a measure space equipped with the normalized
Lebesgue measure. Denote by $\langle f, g \rangle$ the inner
product in $L^2(B_r)$. Consider the operator~$A_\nu^r\colon L^2(B_r)
\to L^2(B_r)$ given by
\[
A_\nu^r f = (A_\nu f)|_{B_r},
\]
where~$A_\nu f$ is the operator~$A_\nu$ applied to the extension
of~$f$ by zeros to all of~$\R^n$.

Then we have that
\[
\limsup_{r \to \infty} \overline{\alpha}(A_\nu^r) \geq
\overline{\alpha}(G(\R^n, N)).
\]

Using Theorem~\ref{th:alpha_bound} we may upper
bound~$\overline{\alpha}(A_\nu^r)$ for every~$r$. For a given~$r > 0$,
we apply the theorem with~$R = R(r) = \langle A_\nu^r 1_{B_r}, 1_{B_r}
\rangle$ and~$\varepsilon = \varepsilon(r) = \|A_\nu^r 1_{B_r} - R(r)
1_{B_r}\|$. We then obtain
\[
\frac{-m(A_\nu^r) + 2\varepsilon(r)}{R(r) -m(A_\nu^r) - \varepsilon(r)} \geq \overline{\alpha}(A_\nu^r).
\]

We claim:
\[
\lim_{r \to \infty} m(A_\nu^r) = m(A_\nu) = \inf_{u \in \R^n} \widehat{\nu}(u), \quad \lim_{r \to \infty}
R(r) = \widehat{\nu}(0), \quad \text{and } \lim_{r \to \infty} \varepsilon(r) = 0.
\]

Now we prove the first identity. For $f \in L^2(\R^n)$, we write $f^r = f|_{B_r}$. Then,
\[
\langle A_\nu^r f^r, f^r \rangle = \frac{1}{\vol
  B_r} (A_\nu f^r, f^r) \geq \frac{1}{\vol
  B_r} m(A_\nu) (f^r, f^r) = m(A_\nu) \langle f^r, f^r \rangle,
\]
and so $m(A^r_\nu) \geq m(A_\nu)$.

For the reverse inequality pick $f \in L^2(\R^n)$ of unit norm such
that $(A_\nu f, f)$ is close to $m(A_\nu)$. Then, by taking $r$ large
enough we can make the norm of $f^r$ as close to one as we want.
Notice that by Cauchy-Schwarz
\[
m(A^r_\nu) \leq \frac{\langle A^r_\nu f^r, f^r\rangle}{\langle f_r, f_r
  \rangle} = \frac{(A_\nu f^r, f^r)}{(f_r, f_r)} \mathop{\longrightarrow}\limits_{r \to \infty} (A_\nu f, f).
\]

Now we prove the second identity.  Since $\nu$ is supported on a
bounded set, we have
\[
(A^r_\nu 1_{B_r})(x) = \int_{\R^n} 1_{B_r}(x-y) \, d\nu(y) \mathop{\longrightarrow}\limits_{r \to
  \infty} \widehat{\nu}(0),
\]
pointwise in~$x$. Then
\[
\lim_{r \to \infty}\langle A^r_\nu 1_{B_r}, 1_{B_r} \rangle = 
\lim_{r \to \infty} \frac{1}{\vol B_r} \int_{B_r} (A^r_\nu
1_{B_r})(x) \, dx = \widehat{\nu}(0).
\]

Indeed, let~$D$ be the diameter of~$N$. Then if~$(A_\nu^r 1_{B_r})(x)
\neq \widehat{\nu}(0)$, we must have~$x \in B_r \setminus B_{r -
  D}$. Hence
\[
\begin{split}
&\frac{1}{\vol B_r} \int_{B_r} |(A^r_\nu
1_{B_r})(x) - \widehat{\nu}(0)| \, dx\\
&\qquad\leq \frac{1}{\vol B_r} \|A^r_\nu
1_{B_r} - \widehat{\nu}(0) \|_{\infty} \vol(B_r \setminus B_{r - D})\\
&\qquad\leq \frac{\vol(B_r \setminus B_{r - D})}{\vol B_r} \|\nu\| \to 0
\end{split},
\]
as~$r \to \infty$, where $\|\nu\|$ is the total variation norm of $\nu$. 

This argument also proves the third identity in the claim, giving us
\begin{equation}
\label{eq:density-bound}
\frac{-\inf_{u \in \R^n} \widehat{\nu}(u)}{\widehat{\nu}(0) - \inf_{u
    \in \R^n} \widehat{\nu}(u)} \geq \overline{\alpha}(G(\R^n, N)).
\end{equation}

\subsection{The odd distance graph}
\label{ssec:odddistance}

A notoriously difficult problem in discrete geometry is whether the
odd distance graph in the plane
\begin{equation*}
G\left(\R^2, \bigcup_{k=0}^\infty (2k+1) S^1 \right)
\end{equation*}
has a finite chromatic number, a question due to Rosenfeld. Ardal,
Ma\v{n}uch, Rosenfeld, Shelah, and
Stacho~\cite{ArdalManuchRosenfeldShelahStacho} showed that the
chromatic number is at least five. It follows from a theorem of
Furstenberg, Katznelson, and Weiss \cite{FurstenbergKatznelsonWeiss}
that the measurable chromatic number of the odd distance graph is
infinite. Steinhardt~\cite{Steinhardt} gave the following alternative
proof of this fact which can be seen as a nice example of the method
described above.

For a positive number $\beta > 1$ Steinhardt defined the
probability measure
\begin{equation*}
\nu = \frac{\beta-1}{\beta}\sum_{k = 0}^{\infty} \beta^{-k} \omega_{2k+1}
\end{equation*}
on $\R^2$ where $\omega_{2k+1}$ is the rotationally invariant probability measure on
the circle of radius $2k+1$ centered at $0$. Then he showed that
\begin{equation*}
\lim_{\beta \to 1} \inf_{u \in \R^2} \frac{\beta-1}{\beta}\sum_{k =
  0}^{\infty} \beta^{-k} \int_{S^1} e^{-2\pi i x \cdot u} \, d\omega_{2k+1}(x) = 0.
\end{equation*}
When we define the measure with bounded support
\[
\nu_{\beta, N} = \frac{\beta-1}{\beta} \sum_{k = 0}^N \beta^{-k} \omega_{2k+1},
\]
we see that
\[
\sup_{u \in \R^2} \widehat{\nu}_{\beta, N}(u) = \widehat{\nu}_{\beta, N}(0)
    \mathop{\longrightarrow}\limits_{\beta \to 1, N \to \infty} 1,
\]
since $\nu_{\beta, N}$ is positive, and
\[
\inf_{u \in \R^2} \widehat{\nu}_{\beta, N}(u) 
    \mathop{\longrightarrow}\limits_{\beta \to 1, N \to \infty} 0,
\]
and so by \eqref{eq:chibound2}
\[
\lim_{N \to \infty} \chim\left(G\left(\R^2, \bigcup_{k = 0}^N
    (2k+1)S^1\right)\right) \geq \lim_{\beta \to 1} \lim_{N \to
  \infty} \frac{\widehat{\nu}_{\beta,N}(0) - \inf_{u \in \R^2}
  \widehat{\nu}(u)}{-\inf_{u \in \R^2}
  \widehat{\nu}(u)}
= \infty.
\]

In a similar way, \eqref{eq:density-bound} can be used to prove a
quantitative version of the theorem of Furstenberg, Katznelson, Weiss;
see Oliveira and Vallentin \cite[Theorem 5.1]{OliveiraVallentin} and
also Kolountzakis~\cite{Kolountzakis} for a similar result and a
similar proof in the case of arbitrary norms whose unit ball is not
polytopal. To obtain this result Kolountzakis chooses a probablity
measure $\nu$ supported on the boundary of the unit norm ball and
studies the decay of the function $u \to \widehat{\nu}(u)$ when $u$
becomes large.

\subsection{The unit distance graph}
\label{ssec:unitdistance}

Let $N$ be the unit sphere $S^{n-1}$. The orthogonal group acts
transitively on $N$. Then the measure that optimizes the bound in
\eqref{eq:chibound2} for the unit distance graph $G(\R^n, S^{n-1})$ is
the rotationally invariant probability measure $\omega$ on
$S^{n-1}$. Its Fourier transform can be explicitly computed:
\begin{equation*}
\widehat{\omega}(u) = \int_{\R^n} e^{-2\pi i x \cdot u} \, d\omega(x) = \Omega_n(\|u\|),
\end{equation*}
where
\begin{equation*}
\Omega_n(t) = \Gamma\left(\frac{n}{2}\right)
\left(\frac{2}{t}\right)^{(n-2)/2} J_{(n-2)/2}(t) \quad \text{with }  \Omega(0) = 1,
\end{equation*}
and $J_{(n-2)/2}$ is the Bessel function of the first kind with
parameter $(n-2)/2$. The global minimum of $\Omega$ is at $j_{n/2,1}$
which is the first positive zero of the Bessel function
$J_{n/2}$. Hence,
\begin{equation*}
\chim(G(\R^n, S^{n-1})) \geq
\frac{\Omega_n(j_{{n/2},1}) - 1}{\Omega_n(j_{{n/2},1})}
\quad \text{and} \quad
\overline{\alpha}(G(\R^n, S^{n-1})) \leq \frac{\Omega_n(j_{{n/2},1})}{\Omega_n(j_{{n/2},1}) - 1}
\end{equation*}
recovering the result of Oliveira and Vallentin \cite[Section
3]{OliveiraVallentin}.

\section{Graphs on the unit sphere}
\label{sec:unit sphere}

In this section we consider distance graphs defined on the unit sphere
$S^{n-1} = \{x \in \R^n : x \cdot x = 1\}$. To define the edge set we
use a Borel subset $D$ of the interval $[-1,1]$ where $1$ does not lie
in the topological closure of~$D$. Then two vertices are adjacent
whenever $x \cdot y \in D$. We denote this graph
by~$G(S^{n-1},D)$. Again, we aim at lower bounding the measurable
chromatic number~$\chim(G(S^{n-1},D))$. Here we extend the technique
of Bachoc, Nebe, Oliveira and
Vallentin~\cite{BachocNebeOliveiraVallentin}, who gave a formulation
for the $\vartheta$-number of~$G(S^{n-1}, D)$. However, they showed
how to compute it only in the case that~$D$ is finite.

The techniques in this section are quite similar to those presented in
the previous section. The orthogonal group, which is a compact,
non-commutative group, is a transitivity group of the measurable graph
$G(S^{n-1},D)$. So one can interpret the results in this section as
the compact, non-abelian case whereas those in the previous section as
the locally-compact, abelian case. In principle there is no technical
difficulty to extend the results of this section from graphs on the
sphere to graphs on compact, connected, rank-one symmetric spaces, see
Oliveira and Vallentin~\cite{OliveiraVallentin2013a}.

\subsection{Computing spectral bounds via spherical harmonics}

Let $\nu$ be a signed Borel measure which is supported on the
set~$D$. For $t \in (-1,1)$ define the operator $A_t \colon C(S^{n-1})
\to C(S^{n-1})$ by
\[
(A_t f)(\xi) = \int_{S^{n-1}} f(\eta) \, d\omega_{\xi,t}(\eta),
\]
where $\omega_{\xi,t}(\eta)$ is the rotationally invariant probability
measure on the $(n-2)$-dimensional sphere $\{\eta \in S^{n-1} : \xi
\cdot \eta = t\}$. 

We choose an orthonormal basis of $C(S^{n-1})$, and so of
$L^2(S^{n-1})$, consisting of spherical harmonics $S_{k,l}$ where $k =
0, 1, \ldots$ and $l = 1, \ldots, c_{k,n}$ with $c_{k,n} =
\binom{k+n-2}{k} + \binom{k+n-3}{k-1}$. The degree of $S_{k,l}$
equals~$k$.

For $\delta > 0$ we have
\[
(A_t f)(\xi) = 
\lim_{\delta \to 0} \frac{1}{2\delta} \int_{t-\delta}^{t + \delta}
\int_{S^{n-1}} f(\eta) \, d\omega_{\xi,u}(\eta)
\frac{(1-u^2)^{(n-3)/2}}{(1-t^2)^{(n-3)/2}} \, du
\]
and by the Funk-Hecke formula (see \cite[Theorem
9.7.1]{AndrewsAskeyRoy}\footnote{In \cite{AndrewsAskeyRoy} the
  Funk-Hecke formula is only stated for continuous functions $f \in
  C([-1,1])$ but in fact it is also valid when $f \in L^1([-1,1])$,
  see Groemer~\cite[Chapter 3.4]{Groemer1996a}. Notice also that we
  use a more convenient normalization of the surface measure here so
  that their term $\omega_{n-1}$ disappears.}) we see that the
spherical harmonics $S_{k,l}$ are eigenfunctions of $A_t$ with
eigenvalue
\[
\lambda_k(t) = \lim_{\delta \to 0} \frac{1}{2\delta}
\int_{t-\delta}^{t + \delta}
\overline{P}^{(\alpha,\alpha)}_k(u)
\frac{(1-u^2)^{(n-3)/2}}{(1-t^2)^{(n-3)/2}} \, du = 
\overline{P}^{(\alpha,\alpha)}_k(t),
\] 
where~$\alpha = (n-3)/2$ and $\overline{P}^{(\alpha, \alpha)}_k$ is a
Jacobi polynomial~of degree~$k$, normalized so
that~$\overline{P}_k^{(\alpha,\alpha)}(1) = 1$. Jacobi polynomials are
orthogonal polynomials defined on the interval $[-1,1]$ with respect
to the measure $(1-u^2)^{(n-3)/2} du$.

Now we can extend $A_t$ to $L^2(S^{n-1})$ by setting
\[
(A_t f)(\xi) = \sum_{k = 0}^{\infty} \lambda_k(t) \sum_{l=1}^{c_{k,l}}
(f,S_{k,l}) S_{k,l}(\xi),
\]
where $(f,g) = \int_{S^{n-1}} f(x) \overline{g(x)} \, d\omega(x)$ with
$\omega$ being the rotationally invariant probability measure on the
sphere. Since $\lambda_k(t)$ lies in the interval $[-1,1]$ the
operator $A_t$ is a bounded, self-adjoint operator. One can estimate
the growth of the eigenvalues following Szeg\"o \cite[(4.1.1) and
Theorem 8.21.8]{Szego1938} by
\[
\lambda_k(t) = O(k^{-1/2-(n-3)/2}),
\]
so the operator is even compact when $n \geq 3$.

Now we can define $A_\nu \colon L^2(S^{n-1}) \to L^2(S^{n-1})$ by
\[
(A_{\nu} f)(\xi) = \sum_{k = 0}^{\infty} \int_{-1}^1 \lambda_k(t) \, d\nu(t) \sum_{l=1}^{c_{k,l}}
(f,S_{k,l}) S_{k,l}(\xi),
\]
and it is clear that
\[
m(A_\nu) = \inf_{k \in \N} \int_{-1}^1 \lambda_k(t) \, d\nu(t) \quad \text{and} \quad M(A_\nu) = \sup_{k \in \N} \int_{-1}^1 \lambda_k(t)
\, d\nu(t).
\]

\subsection{The single inner product graph}

Let $D$ be a Borel subset of the interval $[-1, 1]$ where 1 does not
lie in the topological closure of $D$.  Let $\nu$ be a Borel measure
whose support lies in $D$. It is easy to see that the operator $A_\nu$
respects the graph $G(S^{n-1}, D)$. So one can apply~\eqref{eq:alpha-best}
and~\eqref{eq:chi-best} to this graph.

The case when $D$ consists only of the single inner product $t$ is
particularly simple as no optimization over $\nu$ is necessary to find
the optimal spectral bound:
\begin{equation*}
\overline\alpha(G(S^{n-1},\{t\})) \leq \frac{-\inf_{k\in N}
  \lambda_k(t)}{1-\inf_{k \in \N} \lambda_k(t)}
, \;\;
\chim(G(S^{n-1},\{t\})) \geq \frac{1 - \inf_{k \in \N} \lambda_k(t)}{-\inf_{k \in \N} \lambda_k(t)},
\end{equation*}
where $\lambda_k(t) = \overline{P}^{(\alpha,\alpha)}_k(t)$.  This result also
follows from \cite[Theorem~6.2,
Section~10]{BachocNebeOliveiraVallentin}. The two bounds multiply to
one which also follows from Theorem~\ref{th:transitive}. 

The bound is tight in the planar case $n = 2$ for $t = \cos(p/q \pi)$,
with $p,q \in \N$, $\gcd(p,q) = 1$, when $q$ is even, and when $t =
\cos(x \pi)$ when~$x$ is not rational. It is also tight when $n = 3$
and $t = -1/3$. The latter was first proved by Lov\'asz
\cite{Lovasz1983a} using topological methods.

\section*{Acknowledgements}

The authors thank Markus Haase and David Steurer for helpful
discussions. We thank the referee for the kind words, careful reading and
comments which led to improvements of the paper.

\end{document}